\documentclass[preprint,12pt,authoryear]{elsarticle}

\usepackage[T1]{fontenc}
\usepackage[utf8]{inputenc}
\usepackage{amsmath,amssymb,amsthm,mathtools}
\usepackage{microtype}
\usepackage{tikz}
\usepackage{hyperref}
\usepackage[nameinlink,capitalize]{cleveref}

\numberwithin{equation}{section}

\newtheorem{theorem}{Theorem}[section]
\newtheorem{proposition}[theorem]{Proposition}
\newtheorem{lemma}[theorem]{Lemma}
\newtheorem{corollary}[theorem]{Corollary}

\theoremstyle{definition}
\newtheorem{definition}[theorem]{Definition}

\theoremstyle{remark}
\newtheorem{remark}[theorem]{Remark}

\newcommand{\R}{\mathbb{R}}
\newcommand{\N}{\mathbb{N}}

\newcommand{\calG}{\mathcal{G}}
\newcommand{\calT}{\mathcal{T}}
\newcommand{\tr}{\operatorname{tr}}
\newcommand{\Sym}{\operatorname{Sym}}

\newcommand{\tw}{\operatorname{tw}}
\newcommand{\ELR}{\mathrm{ELR}}

\journal{Journal of Combinatorial Theory, Series B}

\makeatletter
\def\ps@pprintTitle{%
    \let\@oddhead\@empty
    \let\@evenhead\@empty
    \let\@oddfoot\@empty
    \let\@evenfoot\@empty}
\makeatother

\hypersetup{
    colorlinks=true,
    linkcolor=blue,
    citecolor=blue,
    urlcolor=blue
}

\begin{document}

\begin{frontmatter}

\title{Bounded Treewidth and Complete Monotonicity for Scott--Sokal Spanning-Tree Polynomials}

\author[addr1]{Domingos S. P. Salazar\corref{cor1}}
\address[addr1]{Unidade de Educa\c{c}\~ao a Dist\^ancia e Tecnologia,
Universidade Federal Rural de Pernambuco,
52171-900 Recife, Pernambuco, Brazil}
\cortext[cor1]{Corresponding author.}

\begin{abstract}
Scott and Sokal asked for a structural description of the finite graphs \(G\) for which inverse powers \(T_G^{-\beta}\) of the spanning-tree polynomial are completely monotone.  We prove the following bounded-treewidth criterion.  If \(G\) is a finite connected simple graph with \(\operatorname{tw}(G)\le k\), then \(T_G^{-\beta}\) is completely monotone for every
\[
        \beta>\frac{k-1}{2}.
\]
Consequently, every partial \(3\)-tree is covered throughout the first Scott--Sokal open interval \(1<\beta<3/2\), including finite Apollonian networks, \(K_5-e\), and the four-spoke wheel \(W_4\).

The proof combines the real Riesz/Wishart integral for determinants, star--mesh elimination of simplicial vertices, and a Gaussian Laplace kernel for the degree-\(d\) star.  These ingredients produce a Euclidean Laplace representation invariant stable under perfect elimination orderings; general bounded-treewidth graphs follow by chordal completion and monotone deletion of completion edges.  The ordinary Riesz threshold also marks the boundary of the method: in the interval \(1<\beta<3/2\), genuine treewidth-\(4\) cores would require a new endpoint or boundary-Riesz ingredient.
\end{abstract}

\begin{keyword}
complete monotonicity \sep spanning-tree polynomial \sep treewidth
\sep chordal graph \sep partial \(k\)-tree \sep Apollonian networks
\sep planar \(3\)-tree \sep matrix-tree theorem
\sep Riesz distribution \sep Wishart integral \sep star--mesh transform
\sep Kirchhoff determinant \sep Gaussian free field \sep graph polynomial
\MSC[2020] 05C31 \sep 05C83 \sep 26A48 \sep 05C05 \sep 15B48 \sep 44A10
\end{keyword}

\end{frontmatter}

\begingroup
\small
\noindent\textit{Pudim AI disclosure.}
This manuscript was developed with assistance from the Pudim AI research workflow.
Public provenance and APP ledger:
\url{https://github.com/pudim-project/pudim-ai-demo-zetalaw}; APP-0074 and APP-0075.
\par
\endgroup

\medskip

\section{Introduction and motivation}

Let $G=(V,E)$ be a finite connected graph and let
\begin{equation}
    T_G(x)=\sum_{T\in\calT(G)}\prod_{e\in T}x_e
\end{equation}
be its spanning-tree polynomial.  For background on spanning trees and graph Laplacians, see \cite{Bollobas1998,Bapat2010}; for the multivariate Tutte-polynomial viewpoint, see \cite{SokalTutte}.  A central question of Scott and Sokal asks, for a given graph $G$, for which positive exponents $\beta$ the inverse power $T_G^{-\beta}$ is completely monotone on $(0,\infty)^E$ \cite{ScottSokal2014}.  Equivalently, writing $\calG_\beta$ for the class of graphs with this property, one asks for a structural description of $\calG_\beta$.

The broad picture developed in \cite{ScottSokal2014} is sharp in several ranges.  For $\beta\in\{1/2,1,3/2,\ldots\}$ all graphs lie in $\calG_\beta$.  For $0<\beta<1/2$ the class consists of graphs obtained from forests by parallel extension of edges, and for $1/2<\beta<1$ it consists of series-parallel graphs.  The first open interval is therefore
\begin{equation}
        1<\beta<3/2.
\end{equation}
Scott and Sokal explicitly identify two small unresolved test graphs in this interval: the wheel graph with four spokes, $W_4$, and $K_5-e$.
The two graph labellings used below are shown in \cref{fig:test-graphs}.
This paper proves a structural theorem in this interval and beyond.  The main result, \cref{thm:bounded-treewidth}, replaces the parallel-star viewpoint by a standard graph parameter: bounded treewidth controls the complete-monotonicity threshold
\begin{equation}
        \tw(G)\le k
        \quad\Longrightarrow\quad
        T_G^{-\beta}\text{ is completely monotone for every }
        \beta>\frac{k-1}{2}.
\end{equation}
In particular, every partial \(3\)-tree belongs to \(\calG_\beta\) for every \(\beta>1\).  This includes finite Apollonian networks, a standard scale-free planar family from statistical physics, and it also proves the two named Scott--Sokal test graphs \(K_5-e\) and \(W_4\).

The mechanism is simplicial elimination.  Complete graphs are represented by the real Riesz/Wishart integral.  A simplicial vertex is then removed by the star--mesh transform, producing rational clique conductances \(u_i u_j/(u_1+\cdots+u_d)\).  These rational factors are exactly positive Gaussian Laplace kernels once the old clique vertices are interpreted as Euclidean points.  This observation is stable under iteration, provided one uses the stronger invariant of a Euclidean Laplace representation rather than ordinary complete monotonicity alone.

\begin{figure}[t]
\centering
\begin{tikzpicture}[
    scale=0.92,
    vertex/.style={circle,draw,inner sep=1.4pt,minimum size=16pt},
    edge/.style={line width=0.45pt},
    edge label/.style={font=\scriptsize,inner sep=1.2pt}
]
\begin{scope}[xshift=-3.45cm]
    \node[vertex] (s) at (0,1.85) {$s$};
    \node[vertex] (t) at (0,-1.85) {$t$};
    \node[vertex] (one) at (-1.55,0.75) {$1$};
    \node[vertex] (two) at (1.55,0.75) {$2$};
    \node[vertex] (three) at (0,-0.85) {$3$};

    \draw[edge] (one) -- (two);
    \draw[edge] (one) -- (three);
    \draw[edge] (two) -- (three);

    \draw[edge] (s) -- (one);
    \draw[edge] (s) -- (two);
    \draw[edge] (s) -- (three);

    \draw[edge] (t) -- (one);
    \draw[edge] (t) -- (two);
    \draw[edge] (t) -- (three);

    \node[font=\small] at (0,-2.55) {(a) \(K_5-e\)};
\end{scope}

\begin{scope}[xshift=3.45cm]
    \node[vertex] (zero) at (0,0) {$0$};
    \node[vertex] (wone) at (-1.35,1.35) {$1$};
    \node[vertex] (wtwo) at (1.35,1.35) {$2$};
    \node[vertex] (wthree) at (1.35,-1.35) {$3$};
    \node[vertex] (wfour) at (-1.35,-1.35) {$4$};

    \draw[edge] (wone) -- node[edge label,above] {$a$} (wtwo);
    \draw[edge] (wtwo) -- node[edge label,right] {$b$} (wthree);
    \draw[edge] (wthree) -- node[edge label,below] {$c$} (wfour);
    \draw[edge] (wfour) -- node[edge label,left] {$d$} (wone);

    \draw[edge] (zero) -- (wone);
    \draw[edge] (zero) -- (wtwo);
    \draw[edge] (zero) -- (wthree);
    \draw[edge] (zero) -- (wfour);

    \node[edge label] at (-0.78,0.47) {$p$};
    \node[edge label] at (0.78,0.47) {$q$};
    \node[edge label] at (0.78,-0.47) {$r$};
    \node[edge label] at (-0.78,-0.47) {$s$};

    \node[font=\small] at (0,-2.55) {(b) \(W_4\)};
\end{scope}
\end{tikzpicture}
\caption{The two Scott--Sokal test graphs in the notation used below.  Both are partial \(3\)-trees.  In (a), \(K_5-e\) is realized as two nonadjacent apices \(s,t\) joined to the terminal triangle \(1,2,3\); the missing edge is \(st\), with edge variables \(a_i=x_{si}\), \(b_i=x_{ti}\), and \(c_{ij}=x_{ij}\).  In (b), \(W_4\) has hub \(0\), rim cycle \(1-2-3-4-1\), spoke variables \(p,q,r,s\), and rim variables \(a,b,c,d\).}
\label{fig:test-graphs}
\end{figure}

For chordal graphs, a perfect elimination ordering reduces the proof to successive simplicial extensions of complete graphs.  For general graphs of treewidth at most \(k\), one passes to a chordal completion of clique number at most \(k+1\) and then deletes the completion edges by a monotone limit.  The ordinary real Riesz threshold for a clique of size \(k+1\) is precisely \(\beta>(k-1)/2\), which explains the exponent in the theorem.

The theorem does not classify \(\calG_\beta\) in this interval.  By the present method, a genuine treewidth-\(4\) core would require the ordinary threshold \(\beta>3/2\), so the open interval would require a new boundary-Riesz or non-Riesz positivity mechanism.

The paper is organized as follows.  Section~\ref{sec:formalism} fixes complete-monotonicity and graph-polynomial notation.  Section~\ref{sec:riesz-star} proves the Riesz and star--mesh tools, including the degree-\(d\) Gaussian star kernel.  Section~\ref{sec:bounded-treewidth} proves the bounded-treewidth theorem and records the parallel star-lift family as a corollary.  Section~\ref{sec:apollonian} gives the Apollonian-network application, and Sections~\ref{sec:k5e} and \ref{sec:w4} record the named Scott--Sokal test graphs.  Section~\ref{sec:conclusion} records consequences and limitations.

\section{Formalism}
\label{sec:formalism}

\subsection{Complete monotonicity}

We use the standard multivariate version of complete monotonicity and its Laplace-transform interpretation.  Background may be found in Widder's treatment of the Bernstein theorem and in the semigroup formulation of Berg, Christensen and Ressel \cite{Widder,BergChristensenRessel}.

\begin{definition}[Complete monotonicity]
Let $n\ge1$.  For a multi-index \(\alpha=(\alpha_1,\ldots,\alpha_n)\in\N^n\), write \(|\alpha|=\alpha_1+\cdots+\alpha_n\).  A function \(f\in C^\infty((0,\infty)^n)\) is \emph{completely monotone} if, for every such \(\alpha\),
\begin{equation}
        (-1)^{|\alpha|}\partial^\alpha f(x)\ge0,
        \qquad x\in(0,\infty)^n .
\end{equation}
\end{definition}

The following elementary form of the Bernstein--Hausdorff--Widder principle is the only direction used in the proofs \cite[Chapter~IV]{Widder}.

\begin{proposition}[Positive Laplace kernels]
\label{prop:laplace-cm}
Let $\mu$ be a positive Borel measure on $[0,\infty)^n$ such that
\begin{equation}
        f(x)=\int_{[0,\infty)^n} e^{-x\cdot t}\,d\mu(t)
\end{equation}
is finite for every $x\in(0,\infty)^n$.  Then $f$ is completely monotone.
\end{proposition}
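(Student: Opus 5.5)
The plan is to differentiate under the integral sign and read off the sign directly. Fix a multi-index \(\alpha\in\N^n\) and a point \(x\in(0,\infty)^n\). Formally,
\[
(-1)^{|\alpha|}\partial^\alpha f(x)=\int_{[0,\infty)^n} t^\alpha e^{-x\cdot t}\,d\mu(t),\qquad t^\alpha=\prod_i t_i^{\alpha_i}.
\]
The integrand is nonnegative because \(\mu\) is positive and \(t\in[0,\infty)^n\), so the right-hand side is \(\ge0\). What remains is to show that \(f\) is \(C^\infty\) and that this interchange of derivative and integral is legitimate.

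\textbf{Finiteness of the moment integrals.} Choose \(\varepsilon>0\) with \(x_i>2\varepsilon\) for all \(i\), and put \(y=x-\varepsilon\mathbf 1\in(0,\infty)^n\). The elementary bound \(s^m e^{-\varepsilon s}\le (m/(e\varepsilon))^m\) for \(s\ge0\) gives
\[
t^\alpha e^{-z\cdot t}\le C_{\alpha,\varepsilon}\, e^{-y\cdot t}
\]
for every \(z\) in the box \(\{z: z_i\ge x_i-\varepsilon/2\cdot 0 - 0\}\); more precisely, for all \(z\) with \(z_i\ge y_i+\varepsilon\) for each \(i\). By hypothesis \(\int e^{-y\cdot t}\,d\mu(t)=f(y)<\infty\). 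Hence \(e^{-y\cdot t}\) is a \(\mu\)-integrable dominating function, uniform over a neighbourhood of \(x\) (say \(z\) with \(|z_i-x_i|<\varepsilon/2\), shrinking \(\varepsilon\) as needed), for every such \(t^\alpha e^{-z\cdot t}\).

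\textbf{Differentiation and the sign.} Proceed by induction on \(|\alpha|\), differentiating one coordinate at a time. For a single step the mean value theorem bounds the difference quotient of \(t^{\alpha} e^{-z\cdot t}\) in \(z_i\) by \(\sup t^{\alpha+e_i}e^{-z'\cdot t}\) over nearby \(z'\), which is dominated as above. Dominated convergence then shows that the partial derivative exists and equals \(-\int t_i\,t^\alpha e^{-z\cdot t}\,d\mu\). Continuity of these integrals in \(z\) follows from the same domination, so \(f\in C^\infty((0,\infty)^n)\) and the displayed formula holds, giving complete monotonicity. (If \(\mu\) has mass at \(t=0\), that part only contributes a constant and causes no trouble.)

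The only real obstacle is the uniform domination near \(x\). It is handled by sacrificing a small slack \(\varepsilon\) in the exponent and using finiteness of \(f\) at the shifted point \(y\), which the hypothesis provides at every point of the open orthant.
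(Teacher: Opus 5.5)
Your argument is correct and is essentially the paper's proof. Both dominate $t^\alpha e^{-z\cdot t}$, uniformly for $z$ near $x$, by a constant multiple of the Laplace kernel $e^{-y\cdot t}$ at a slightly smaller point $y=x-\varepsilon\mathbf 1$, whose integral is $f(y)<\infty$, and then differentiate under the integral sign by dominated convergence; the truncation $t_i\le R$ that the paper mentions is not needed in your version. One bookkeeping fix: the garbled box and the condition $z_i\ge y_i+\varepsilon$ do not define a neighbourhood of $x$, but for $|z_i-x_i|<\varepsilon/2$ you have $z_i>y_i+\varepsilon/2$, so the same estimate holds with constant $C_{\alpha,\varepsilon/2}$.
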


\begin{proof}
For each multi-index $\alpha$ and each compact subcone $K\Subset(0,\infty)^n$, the integrand
\begin{equation}
        t^\alpha e^{-x\cdot t}
\end{equation}
is dominated locally by finite linear combinations of Laplace kernels at slightly smaller positive arguments.  After truncating by $t_i\le R$, monotone or dominated convergence justifies passage to the limit.  Thus
\begin{equation}
        (-1)^{|\alpha|}\partial^\alpha f(x)
        =\int_{[0,\infty)^n} t^\alpha e^{-x\cdot t}\,d\mu(t)\ge0.
\end{equation}
\end{proof}

\begin{lemma}[Closure properties]
\label{lem:closure}
Let $f$ and $g$ be completely monotone functions on positive orthants.
\begin{enumerate}
\item If $f$ and $g$ are functions of disjoint variable blocks, then $fg$ is completely monotone in the union of the variables.
\item More generally, if $f(x)=\int e^{-x\cdot s}\,d\mu(s)$ and $g(x)=\int e^{-x\cdot t}\,d\nu(t)$ are positive Laplace transforms in the same variables, then $fg$ is a positive Laplace transform, namely the transform of the convolution of $\mu$ and $\nu$.
\item If $f_y(x)$ is completely monotone for each $y$ and $F(x)=\int f_y(x)\,d\nu(y)$ is finite for a positive measure $\nu$, then $F$ is completely monotone.
\end{enumerate}
\end{lemma}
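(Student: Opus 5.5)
The three items are elementary, and I will prove them in the order (1), (3), (2). The main tool is the definition of complete monotonicity. \cref{prop:laplace-cm} enters only where a Laplace representation is given.

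For item (1), write $x=(y,z)$, where $f$ depends only on $y$ and $g$ only on $z$. For a multi-index $\alpha=(\alpha_y,\alpha_z)$ the mixed derivative factorizes with no Leibniz cross terms, because $\partial_y g=0$ and $\partial_z f=0$. Hence
\begin{equation}
(-1)^{|\alpha|}\partial^\alpha(fg)=\bigl((-1)^{|\alpha_y|}\partial^{\alpha_y}f\bigr)\bigl((-1)^{|\alpha_z|}\partial^{\alpha_z}g\bigr)\ge0 .
\end{equation}
Smoothness of $fg$ is immediate.

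Item (2) is more general. I would note that the Leibniz rule already gives complete monotonicity of any product of completely monotone functions in the same variables: $(-1)^{|\alpha|}\partial^\alpha(fg)=\sum_{\gamma\le\alpha}\binom{\alpha}{\gamma}(-1)^{|\gamma|}\partial^\gamma f\,(-1)^{|\alpha-\gamma|}\partial^{\alpha-\gamma}g$, which is a sum of nonnegative terms. For the stated Laplace form I would use Tonelli on the product measure $\mu\otimes\nu$:
\begin{equation}
f(x)g(x)=\iint e^{-x\cdot(s+t)}\,d\mu(s)\,d\nu(t)=\int e^{-x\cdot u}\,d(\mu*\nu)(u),
\end{equation}
where $\mu*\nu$ is the pushforward of $\mu\otimes\nu$ under $(s,t)\mapsto s+t$. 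This pushforward is a positive Borel measure on $[0,\infty)^n$, and the integral is finite because $f(x)g(x)<\infty$. \cref{prop:laplace-cm} then gives complete monotonicity again.

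Item (3) is the only step with any analytic content, namely differentiating under the integral sign. I would interpret it under a mild regularity convention: $y\mapsto f_y(x)$ is measurable, and $F$ is finite on the open orthant. The key tool is a domination estimate valid for any completely monotone $h$ of one variable. Since $-h'$ is nonnegative and nonincreasing,
\begin{equation}
0\le -h'(x)\le \frac{h(x-\delta)-h(x)}{\delta}\le \frac{h(x-\delta)}{\delta}.
\end{equation}
Iterating this coordinatewise bounds $|\partial^\alpha f_y(x)|$ by $C_{\alpha,\delta}\,f_y(x-\delta\mathbf 1)$, uniformly for $x$ in a neighbourhood where $x-\delta\mathbf 1$ stays in the orthant. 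The dominating function is $\nu$-integrable because $F(x-\delta\mathbf 1)<\infty$. Dominated convergence then justifies differentiating under the integral to every order, so
\begin{equation}
(-1)^{|\alpha|}\partial^\alpha F=\int(-1)^{|\alpha|}\partial^\alpha f_y\,d\nu(y)\ge0 .
\end{equation}
The same domination also gives continuity of these derivatives, hence $F\in C^\infty$.

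The main obstacle is the uniformity of this domination over $y$. To avoid it, the alternative is Bernstein's theorem: represent each $f_y$ as a Laplace transform of a measure $\mu_y$, and then $F$ is the Laplace transform of $\int\mu_y\,d\nu(y)$ by Tonelli. This route needs measurability of $y\mapsto\mu_y$. Since the paper only uses Laplace-representable kernels, such as Gaussian and Riesz integrals, I would state item (3) in that setting and use the Tonelli argument.
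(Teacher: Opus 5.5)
Your proof is correct. Items (1) and (2) match the paper: you factorize derivatives for (1), and for (2) you multiply the Laplace kernels and push $\mu\otimes\nu$ forward under addition.

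For item (3) your main argument takes a different route. The paper's primary argument assumes each $f_y$ comes with a positive Laplace representation and invokes \cref{prop:laplace-cm}; that is your fallback. The paper's alternative, ``integrate the inequalities against $\nu$ after truncation and apply monotone convergence'', never justifies interchanging $\partial^\alpha$ with $\int\,d\nu$. Your bound supplies exactly that missing domination:
\[
|\partial^\alpha f_y(x)|\le \delta^{-|\alpha|}f_y(x-\delta\alpha)\le \delta^{-|\alpha|}f_y\bigl(x-\delta\max_i\alpha_i\,\mathbf 1\bigr).
\]
It is extracted from complete monotonicity alone, by applying the one-variable estimate to the completely monotone functions $(-1)^{|\alpha|}\partial^\alpha f_y$. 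This gives the lemma in its stated generality. It needs only pointwise measurability of $y\mapsto f_y(x)$, with no Bernstein theorem and no measurable choice of $y\mapsto\mu_y$.

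Your closing worry about uniformity in $y$ is unfounded. The constant $C_{\alpha,\delta}$ does not depend on $y$. By coordinatewise monotonicity, the single function $y\mapsto f_y(x_0-c\mathbf 1)$ dominates $|\partial^\alpha f_y(x)|$ for all $x$ near $x_0$. It is $\nu$-integrable because $F(x_0-c\mathbf 1)<\infty$. So you can keep item (3) as stated rather than retreat to the Laplace-representable case.
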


\begin{proof}
The first two assertions follow either by differentiating products or by multiplying the corresponding Laplace kernels.  The third follows from \cref{prop:laplace-cm} when the $f_y$ are represented by positive Laplace kernels.  Equivalently, one may integrate the inequalities
$(-1)^{|\alpha|}\partial^\alpha f_y\ge0$
against $\nu$ after truncation and apply monotone convergence.
\end{proof}

\subsection{Spanning-tree polynomials and Scott--Sokal graph classes}

Our graph notation is standard.  Unless explicitly noted otherwise, graphs are finite, connected, and simple when treewidth is discussed.  A spanning tree is a connected acyclic spanning subgraph, and the weighted generating function below is the usual spanning-tree polynomial \cite{Bollobas1998,Bapat2010,SokalTutte}.

\begin{definition}[Spanning-tree polynomial]
Let $G=(V,E)$ be a finite connected graph.  Its spanning-tree polynomial is
\begin{equation}
        T_G(x)=\sum_{T\in\calT(G)}\prod_{e\in T}x_e,
        \qquad x=(x_e)_{e\in E}.
\end{equation}
Here $\calT(G)$ denotes the set of spanning trees of $G$.
\end{definition}

Throughout the paper the variables $x_e$ are taken in the positive edge cone $(0,\infty)^E$.  We often call them conductances, following the standard electrical-network convention used for weighted graph Laplacians and star--mesh transformations \cite{DoyleSnell1984,CurtisIngermanMorrow1998}.

\begin{definition}[The Scott--Sokal class]
For $\beta>0$, let $\calG_\beta$ denote the class of finite connected graphs $G$ such that $T_G^{-\beta}$ is completely monotone on $(0,\infty)^E$.
\end{definition}

\begin{proposition}[Matrix-tree determinant form]
\label{prop:matrix-tree}
Fix a root vertex $r\in V$.  For each edge $e=ij$, let $b_e\in\R^{V\setminus\{r\}}$ be the signed incidence vector obtained from $e_i-e_j$ after deleting the root coordinate.  Then
\begin{equation}
        T_G(x)=\det\left(\sum_{e\in E}x_e b_e b_e^T\right).
\end{equation}
\end{proposition}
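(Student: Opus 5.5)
The plan is the standard reduced-Laplacian argument. Write the weighted Laplacian as $L(x)=\sum_{e\in E}x_e(e_i-e_j)(e_i-e_j)^T$, an $|V|\times|V|$ matrix, and note that deleting the row and column of the root $r$ gives exactly the matrix $L_r(x)=\sum_e x_e b_e b_e^T$ in the statement. The identity $T_G(x)=\det L_r(x)$ is then the weighted matrix-tree theorem. We prove it via Cauchy--Binet instead of citing it, since that also explains the factorization into incidence vectors.

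Let $n=|V|-1$ and let $B$ be the $n\times|E|$ matrix whose columns are the $b_e$. Let $X=\operatorname{diag}(x_e)$. Then $L_r(x)=BXB^T=(BX^{1/2})(BX^{1/2})^T$ (for $x_e>0$; the identity is polynomial, so it extends to all $x$). Cauchy--Binet gives
\begin{equation}
        \det L_r(x)=\sum_{S\subseteq E,\ |S|=n}\det(B_S)^2\prod_{e\in S}x_e ,
\end{equation}
where $B_S$ is the $n\times n$ submatrix of $B$ with columns indexed by $S$. It therefore suffices to show that $\det(B_S)\in\{0,\pm1\}$, with $\det(B_S)=\pm1$ exactly when $S$ is the edge set of a spanning tree.

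\textbf{$S$ contains a cycle.} The signed sum of the vectors $e_i-e_j$ around the cycle vanishes, so the columns of $B_S$ are linearly dependent and $\det B_S=0$. Since $|S|=n=|V|-1$, an acyclic $S$ is automatically a spanning tree.

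\textbf{$S$ is a spanning tree.} Argue by induction on $n$. A tree with at least two vertices has at least two leaves, so it has a leaf $v\neq r$. The row of $v$ in $B_S$ has a single nonzero entry $\pm1$, in the column of the unique edge at $v$. Expanding along that row reduces to $\pm\det B_{S'}$, where $S'$ is the spanning tree of $G-v$ obtained by deleting that edge; here $r$ remains the root. The base case $n=0$ is the empty determinant, equal to $1$. Hence $\det(B_S)^2=1$, and the sum becomes $\sum_{T\in\calT(G)}\prod_{e\in T}x_e=T_G(x)$.

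The only mildly delicate step is the leaf expansion: one must choose the leaf distinct from $r$, and check that deleting both its row and its edge column leaves exactly the incidence matrix of the smaller tree rooted at $r$. Everything else is bookkeeping. The same argument works for multigraphs, with parallel edges contributing separate columns.
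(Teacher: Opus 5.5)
Your proof is correct and follows the same route as the paper: you identify $\sum_e x_e b_e b_e^T$ as the weighted Laplacian with the root row and column deleted, and then apply the weighted matrix-tree theorem. The only difference is that the paper cites Kirchhoff's theorem at that step, whereas you prove it via Cauchy--Binet and the fact that $\det B_S\in\{0,\pm1\}$, with $\pm1$ exactly on spanning trees; your leaf-expansion induction with a leaf $v\neq r$ handles this correctly and makes the proposition self-contained.
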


\begin{proof}
This is Kirchhoff's matrix-tree theorem in weighted form \cite{Kirchhoff,Bapat2010}.  The weighted graph Laplacian is
\begin{equation}
        L_G(x)=\sum_{e\in E}x_e \widetilde b_e\widetilde b_e^T,
\end{equation}
where $\widetilde b_e$ is any oriented incidence vector of $e$ in $\R^V$.  Deleting the row and column indexed by $r$ gives the reduced Laplacian
\begin{equation}
        L_G^{(r)}(x)=\sum_{e\in E}x_e b_e b_e^T .
\end{equation}
Kirchhoff's matrix-tree theorem gives $\det L_G^{(r)}(x)=T_G(x)$.
\end{proof}

\section{Riesz and star--mesh tools}
\label{sec:riesz-star}

\subsection{The real Riesz/Wishart determinant kernel}

Let $\Pi_m$ denote the cone of positive-definite real symmetric $m\times m$ matrices.  We use the standard real symmetric-cone normalization of the Riesz/Wishart integral \cite{FarautKoranyi,Muirhead1982}.  The multivariate gamma function is
\begin{equation}
        \Gamma_m(\alpha)=\int_{\Pi_m} e^{-\tr Y}\det(Y)^{\alpha-(m+1)/2}\,dY,
\end{equation}
finite precisely for $\alpha>(m-1)/2$; explicitly,
\begin{equation}
        \Gamma_m(\alpha)=\pi^{m(m-1)/4}\prod_{j=1}^m \Gamma\left(\alpha-\frac{j-1}{2}\right).
\end{equation}

\begin{lemma}[Riesz/Wishart integral]
\label{lem:riesz}
Let $M\in\Pi_m$ and let $\beta>(m-1)/2$.  Then
\begin{equation}
        \det(M)^{-\beta}
        =\Gamma_m(\beta)^{-1}
          \int_{\Pi_m} e^{-\tr(MY)}\det(Y)^{\beta-(m+1)/2}\,dY .
\end{equation}
\end{lemma}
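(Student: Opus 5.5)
The plan is to reduce the identity to the definition of $\Gamma_m(\beta)$ by a linear change of variables on the cone $\Pi_m$. Since $M\in\Pi_m$, it has a positive-definite square root $M^{1/2}$, and the map $\Phi(Y)=M^{1/2}YM^{1/2}$ is a linear bijection of the space of real symmetric $m\times m$ matrices that carries $\Pi_m$ onto itself; its inverse is $Z\mapsto M^{-1/2}ZM^{-1/2}$. In the integral on the right-hand side I will substitute $Z=M^{1/2}YM^{1/2}$. Cyclicity of the trace gives $\tr(MY)=\tr(M^{1/2}YM^{1/2})=\tr Z$, and multiplicativity of the determinant gives $\det Y=\det(M)^{-1}\det Z$.

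The only genuinely nontrivial ingredient is the Jacobian of the congruence map $Y\mapsto AYA^T$ on symmetric matrices. This map has determinant $|\det A|^{m+1}$ with respect to Lebesgue measure $dY=\prod_{i\le j}dY_{ij}$. I would justify this in the standard way. The map $A\mapsto$ (Jacobian of $Y\mapsto AYA^T$) is multiplicative in $A$, so it suffices to check it on generators of $GL_m(\R)$. For diagonal $A=\operatorname{diag}(a_1,\dots,a_m)$, the coordinate $Y_{ij}$ is scaled by $a_ia_j$, and the product over $i\le j$ is $\prod_i a_i^{m+1}$. For elementary shear matrices the map is unipotent in a suitable ordering of the coordinates, so its Jacobian is $1$. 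Alternatively one may simply cite \cite{Muirhead1982}. With $A=M^{1/2}$ this gives $dZ=\det(M)^{(m+1)/2}\,dY$.

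Substituting, the integrand becomes
\[
e^{-\tr Z}\,\det(M)^{-(\beta-(m+1)/2)}\det(Z)^{\beta-(m+1)/2}\,\det(M)^{-(m+1)/2}\,dZ .
\]
The powers of $\det M$ combine to $\det(M)^{-\beta}$. The remaining integral over $Z\in\Pi_m$ is exactly $\Gamma_m(\beta)$, which is finite precisely because $\beta>(m-1)/2$, as recorded above. Dividing by $\Gamma_m(\beta)$ yields the claim. Since the integrand is nonnegative, Tonelli justifies the change of variables without further integrability concerns.

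The main obstacle is only the Jacobian computation. Everything else is bookkeeping of exponents, and the threshold $\beta>(m-1)/2$ enters solely through the finiteness of $\Gamma_m(\beta)$. The explicit product formula for $\Gamma_m$ is not needed here, though it confirms the convergence range.
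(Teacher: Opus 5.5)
Your proof is correct and follows the paper's argument: the congruence substitution $Y=M^{-1/2}ZM^{-1/2}$, cyclicity of the trace, multiplicativity of the determinant, and the Jacobian factor $\det(M)^{(m+1)/2}$ reduce the integral to $\Gamma_m(\beta)$. Your check of the Jacobian of $Y\mapsto AYA^T$ on diagonal and shear generators is a welcome addition, since the paper only asserts it.
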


\begin{proof}
Apply the change of variables $Y=M^{-1/2}ZM^{-1/2}$ in the defining integral for $\Gamma_m(\beta)$.  Then
\begin{equation}
        \tr(MY)=\tr Z,
        \qquad
        \det Y=\det(M)^{-1}\det Z.
\end{equation}
The Lebesgue measure on $\Sym(m,\R)$ transforms as
\begin{equation}
        dY=\det(M)^{-(m+1)/2}\,dZ.
\end{equation}
Therefore
\begin{align}
\int_{\Pi_m} e^{-\tr(MY)}\det(Y)^{\beta-(m+1)/2}\,dY
&=\det(M)^{-\beta}\int_{\Pi_m} e^{-\tr Z}\det(Z)^{\beta-(m+1)/2}\,dZ  \\
&=\Gamma_m(\beta)\det(M)^{-\beta}.
\end{align}
Dividing by $\Gamma_m(\beta)$ proves the identity.
\end{proof}

\begin{corollary}[Determinantal complete monotonicity]
\label{cor:det-cm}
Let $A_1,\ldots,A_n$ be positive-semidefinite real symmetric $m\times m$ matrices and suppose $M(x)=\sum_i x_i A_i$ is positive definite for all $x\in(0,\infty)^n$.  If $\beta>(m-1)/2$, then
\begin{equation}
        x\longmapsto \det M(x)^{-\beta}
\end{equation}
is completely monotone on $(0,\infty)^n$.
\end{corollary}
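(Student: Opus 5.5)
The plan is to apply \cref{lem:riesz} pointwise in $x$ and then read the resulting integral as a positive Laplace transform in the variables $x$. Fix $x\in(0,\infty)^n$. Since $M(x)\in\Pi_m$ and $\beta>(m-1)/2$, \cref{lem:riesz} gives
\begin{equation}
        \det M(x)^{-\beta}
        =\Gamma_m(\beta)^{-1}\int_{\Pi_m}
          \exp\Bigl(-\sum_{i=1}^n x_i\,\tr(A_iY)\Bigr)
          \det(Y)^{\beta-(m+1)/2}\,dY ,
\end{equation}
where we have used linearity of the trace, $\tr(M(x)Y)=\sum_i x_i\tr(A_iY)$.

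The key observation is that $t_i(Y):=\tr(A_iY)\ge0$ for every $Y\in\Pi_m$. Indeed, $A_i\succeq0$ and $Y\succ0$ give $\tr(A_iY)=\tr(Y^{1/2}A_iY^{1/2})\ge0$. Hence the integrand is $e^{-x\cdot t(Y)}$ with $t(Y)\in[0,\infty)^n$, multiplied by the positive density $\Gamma_m(\beta)^{-1}\det(Y)^{\beta-(m+1)/2}$; note $\Gamma_m(\beta)$ is finite and positive because $\beta>(m-1)/2$. Let $\mu$ be the pushforward of this positive measure on $\Pi_m$ under the continuous map $Y\mapsto t(Y)$. Then $\mu$ is a positive Borel measure on $[0,\infty)^n$, and
\begin{equation}
        \det M(x)^{-\beta}=\int_{[0,\infty)^n}e^{-x\cdot t}\,d\mu(t).
\end{equation}
This is finite for every $x\in(0,\infty)^n$, since the left side is finite whenever $M(x)$ is positive definite.

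\cref{prop:laplace-cm} then yields complete monotonicity. Equivalently, one may invoke \cref{lem:closure}(3): each $x\mapsto e^{-x\cdot t(Y)}$ is completely monotone because $t(Y)\ge0$, and we integrate these against a positive measure with finite result.

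I do not expect a genuine obstacle here. The only points needing care are nonnegativity of $\tr(A_iY)$, which is where positive semidefiniteness of the $A_i$ enters, and finiteness of the integral, which is exactly where the hypothesis $\beta>(m-1)/2$ is used. Smoothness of $\det M(x)^{-\beta}$ on the open orthant also holds, since $\det M(x)$ is a polynomial that is positive there.
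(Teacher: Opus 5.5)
Your proposal is correct and follows essentially the same route as the paper: apply \cref{lem:riesz} pointwise, observe $\tr(A_iY)\ge0$ for $Y\in\Pi_m$, and invoke \cref{prop:laplace-cm} on the resulting positive Laplace representation. Your explicit push-forward measure, the finiteness check, and the proof that $\tr(A_iY)=\tr(Y^{1/2}A_iY^{1/2})\ge0$ are harmless elaborations of steps the paper leaves implicit.
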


\begin{proof}
By \cref{lem:riesz},
\begin{equation}
\det M(x)^{-\beta}
=\Gamma_m(\beta)^{-1}\int_{\Pi_m}
\exp\left(-\sum_i x_i\tr(A_iY)\right)
\det(Y)^{\beta-(m+1)/2}\,dY .
\end{equation}
Since $A_i,Y$ are positive semidefinite, $\tr(A_iY)\ge0$.  This is a positive Laplace representation in the variables $x_i$, and \cref{prop:laplace-cm} applies.
\end{proof}

\subsection{Star--mesh elimination}

The next lemma is the spanning-tree-polynomial form of the familiar star--mesh, or $Y$--$\Delta$, elimination from electrical network theory \cite{DoyleSnell1984,CurtisIngermanMorrow1998}.  We state it for a degree-\(d\) eliminated vertex because the bounded-treewidth proof iterates this operation along chordal elimination orderings.

\begin{lemma}[Star--mesh elimination]
\label{lem:starmesh}
Let \(H=(V,E_H)\) be a connected network and let \(S\subseteq V\) have size \(d\ge1\).  Form \(G\) by adjoining a new vertex \(v\) adjacent exactly to the vertices of \(S\), with conductances \(u_i>0\) for \(i\in S\).  Put
\begin{equation}
        U=\sum_{i\in S}u_i,
        \qquad
        p_{ij}(u)=\frac{u_i u_j}{U}
        \quad (i<j,\ i,j\in S).
\end{equation}
Let \(x+p(u)\) denote the edge-weight vector of the network obtained from \(H\) by adding \(p_{ij}(u)\) to the conductance between \(i\) and \(j\) for every distinct \(i,j\in S\), creating that terminal edge if it was absent, and leaving all other edge weights unchanged.  Then
\begin{equation}
        T_G(x,u)=U\,T_H(x+p(u)).
\end{equation}
\end{lemma}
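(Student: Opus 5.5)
We will use the matrix-tree determinant form (\cref{prop:matrix-tree}) together with a Schur complement computation. Choose the root \(r\) inside \(V\), i.e.\ among the vertices of \(H\), so that \(v\) survives as a coordinate of the reduced Laplacian of \(G\). Order the coordinates with \(v\) last and write
\begin{equation}
        L_G^{(r)}(x,u)=\begin{pmatrix} L_H^{(r)}(x)+D_u & -\tilde u\\ -\tilde u^T & U\end{pmatrix}.
\end{equation}
Here \(D_u\) is the diagonal matrix carrying \(u_i\) in the coordinate of \(i\in S\) and zero elsewhere, and \(\tilde u\) is the vector with entries \(u_i\) on \(S\) and zero elsewhere. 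If \(r\in S\), the corresponding entries are simply deleted; the edge \(vr\) still contributes \(u_r\) to the \((v,v)\) entry, which is therefore \(U\) in all cases. This block form follows directly from \(L=\sum_e x_e\widetilde b_e\widetilde b_e^T\), because each star edge \(vi\) contributes \(u_i\) to the \((i,i)\) and \((v,v)\) entries and \(-u_i\) to the \((i,v)\) entries.

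Since \(U>0\), the Schur complement formula gives
\begin{equation}
        \det L_G^{(r)} = U\,\det\bigl(L_H^{(r)}(x)+D_u-U^{-1}\tilde u\tilde u^T\bigr).
\end{equation}
The main step is to recognise the matrix \(D_u-U^{-1}\tilde u\tilde u^T\) as the reduced Laplacian of the complete graph on \(S\) with conductances \(p_{ij}=u_iu_j/U\). To see this, first work in full coordinates on \(V\), before deleting \(r\). The off-diagonal \((i,j)\) entry is \(-u_iu_j/U=-p_{ij}\). The diagonal entry is
\begin{equation}
        u_i-\frac{u_i^2}{U}=\frac{u_i(U-u_i)}{U}=\sum_{j\in S,\,j\ne i}p_{ij},
\end{equation}
and this is exactly the weighted degree of \(i\) in the mesh. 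Because the full-coordinate matrix equals the mesh Laplacian, deleting the row and column of \(r\) commutes with the identity; this is the same deletion used in the reduced Schur complement, whether or not \(r\in S\). Hence \(L_H^{(r)}(x)+D_u-U^{-1}\tilde u\tilde u^T=L^{(r)}_{H'}(x+p(u))\), where \(H'\) is \(H\) with the terminal edges added. Since the Laplacian is linear in the edge weights, adding \(p_{ij}\) to an existing edge and creating a new parallel edge give the same matrix.

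Applying \cref{prop:matrix-tree} again to \(H'\), which is connected because \(H\) is, yields \(T_G(x,u)=U\,T_H(x+p(u))\). For \(d=1\) the mesh is empty and the identity reduces to \(T_G=u_1T_H\), which is consistent with the argument above. The only delicate point is the root bookkeeping when \(r\in S\), and the full-coordinate argument in the previous paragraph handles it. Alternatively, one can avoid this issue entirely by rooting at some vertex of \(H\) outside \(S\) whenever such a vertex exists. If none exists, we have \(V=S\), and the full-coordinate argument applies unchanged.
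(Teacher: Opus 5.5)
Your proof is correct and follows the same route as the paper: root at a vertex of \(H\), place \(v\) last, take the Schur complement of the scalar block \(U\), identify \(D_u-U^{-1}\tilde u\tilde u^T\) as the reduced Laplacian of the complete graph on \(S\) with conductances \(u_iu_j/U\), and apply the matrix-tree theorem again. Your explicit diagonal-entry computation and your bookkeeping for the case \(r\in S\) supply details that the paper leaves implicit.
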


\begin{proof}
Choose a root vertex different from \(v\), and put \(v\) last in the reduced weighted Laplacian of \(G\).  The reduced Laplacian has block form
\begin{equation}
        \begin{pmatrix}
        L_H(x)+D & -w\\
        -w^T & U
        \end{pmatrix},
\end{equation}
where \(D\) and \(w\) encode the conductances incident to \(v\).  Taking the Schur complement of the scalar block \(U\) \cite{HornJohnson2013} gives
\begin{equation}
        \det L_G
        =
        U\det\left(L_H(x)+D-\frac{ww^T}{U}\right).
\end{equation}
The matrix \(D-ww^T/U\) is exactly the reduced Laplacian contribution of the complete graph on \(S\), with edge conductances \(u_i u_j/U\).  These conductances are added to the terminal network, either increasing existing terminal edges or creating absent ones.  The matrix-tree theorem gives the asserted identity.
\end{proof}

For a three-edge star with conductances \(u=(u_1,u_2,u_3)\), we continue to write
\begin{equation}
        p(u)=\left(\frac{u_1u_2}{U},\frac{u_1u_3}{U},\frac{u_2u_3}{U}\right),
        \qquad U=u_1+u_2+u_3,
\end{equation}
with coordinates indexed by \(12,13,23\).

\subsection{The Gaussian star kernel}

\begin{lemma}[Gaussian star kernel]
\label{lem:gaussian-star}
Let \(d\ge1\), let \(\xi_1,\ldots,\xi_d\) be points in a real Euclidean space, and put
\begin{equation}
        r_{ij}=\|\xi_i-\xi_j\|^2,
        \qquad
        U=\sum_{i=1}^d u_i,
        \qquad
        u_i>0.
\end{equation}
If
\begin{equation}
        \beta>\frac{d-1}{2},
\end{equation}
then
\begin{equation}
        K(u)
        =
        U^{-\beta}
        \exp\left(
        -\frac1U\sum_{1\le i<j\le d}u_i u_j r_{ij}
        \right)
\end{equation}
is a positive Laplace transform in \((u_1,\ldots,u_d)\).  More precisely, after embedding the original Euclidean space into a larger Euclidean space if necessary, let \(B\) be any \((d-1)\)-dimensional affine subspace containing \(\xi_1,\ldots,\xi_d\).  Then
\begin{equation}
\begin{aligned}
        K(u)
        &=
        \frac{\pi^{-(d-1)/2}}{\Gamma(\beta-(d-1)/2)}
        \int_0^\infty\int_B
        t^{\beta-(d-1)/2-1}  \\
        &\quad\times
        \exp\left(
        -\sum_{i=1}^d u_i\bigl(t+\|z-\xi_i\|^2\bigr)
        \right)\,dz\,dt .
\end{aligned}
\end{equation}
For \(d=1\), \(B=\{\xi_1\}\) and the empty pair sum is zero.  In particular, for \(d=3\) the kernel used in the parallel star-lift proof is completely monotone for every \(\beta>1\).
\end{lemma}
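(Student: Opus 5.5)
We will verify the displayed integral formula by evaluating the right-hand side directly, first in \(z\) and then in \(t\). Positivity of the Laplace representation is then immediate: the integrand is \(\exp(-\sum_i u_i s_i)\) with \(s_i=t+\|z-\xi_i\|^2\ge0\), and the weight \(t^{\beta-(d-1)/2-1}\,dz\,dt\) is a positive measure. The pushforward of this measure under \((z,t)\mapsto s\) is therefore a positive Borel measure on \([0,\infty)^d\). Finiteness of the integral for \(u\in(0,\infty)^d\) will come out of the computation, so \cref{prop:laplace-cm} gives complete monotonicity.

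For the \(z\)-integral, we complete the square. Put \(\bar\xi=U^{-1}\sum_i u_i\xi_i\), which lies in \(B\) because it is an affine combination of the \(\xi_i\). We have the identity
\begin{equation}
\sum_{i=1}^d u_i\|z-\xi_i\|^2 = U\|z-\bar\xi\|^2+\sum_i u_i\|\xi_i-\bar\xi\|^2 .
\end{equation}
We also use the standard variance identity
\begin{equation}
\sum_i u_i\|\xi_i-\bar\xi\|^2=\frac1U\sum_{i<j}u_iu_j r_{ij}.
\end{equation}
To check it, expand \(\sum_{i,j}u_iu_j\|\xi_i-\xi_j\|^2=2U\sum_i u_i\|\xi_i\|^2-2\|\sum_i u_i\xi_i\|^2\). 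Since \(B\) is a \((d-1)\)-dimensional affine space and \(\bar\xi\in B\), the Gaussian integral is
\begin{equation}
\int_B e^{-U\|z-\bar\xi\|^2}\,dz=(\pi/U)^{(d-1)/2}.
\end{equation}
The existence of \(B\) holds because \(d\) points span an affine space of dimension at most \(d-1\), and we enlarge the ambient space if it has smaller dimension. The \(z\)-integral is therefore
\begin{equation}
\pi^{(d-1)/2}U^{-(d-1)/2}\exp\Bigl(-U^{-1}\sum_{i<j}u_iu_jr_{ij}\Bigr).
\end{equation}

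The \(t\)-integral is \(\int_0^\infty t^{\gamma-1}e^{-Ut}\,dt=\Gamma(\gamma)U^{-\gamma}\) with \(\gamma=\beta-(d-1)/2\). This is finite precisely because \(\beta>(d-1)/2\), which is where the hypothesis enters. Multiplying the two factors, the powers of \(U\) combine to \(U^{-(d-1)/2-\gamma}=U^{-\beta}\), and the constants cancel against the prefactor \(\pi^{-(d-1)/2}/\Gamma(\gamma)\). This yields \(K(u)\). Tonelli's theorem justifies the order of integration, since the integrand is nonnegative.

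For \(d=1\) the set \(B=\{\xi_1\}\) is a point, the \(z\)-integral is the counting measure with value \(1\), and the formula reduces to the scalar Gamma integral for \(U^{-\beta}\). The case \(d=3\) is the specialization \(\beta>1\).

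The only real subtlety is the measure-theoretic bookkeeping: we must ensure the representation is a genuine Laplace transform of a positive measure on \([0,\infty)^d\), with \(B\) carrying \((d-1)\)-dimensional Lebesgue measure. This is handled by the pushforward argument above. The computation itself is routine.
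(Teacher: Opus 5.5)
Your proposal is correct and follows essentially the same route as the paper. Both arguments use the weighted variance (completing-the-square) identity with the barycenter $\bar\xi\in B$, evaluate the $(d-1)$-dimensional Gaussian integral over $B$, and multiply by the Gamma integral for $U^{-(\beta-(d-1)/2)}$; your explicit pushforward and Tonelli remarks only spell out the positivity bookkeeping more than the paper does.
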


\begin{proof}
The weighted variance identity gives
\begin{equation}
        \sum_{i=1}^d u_i\|z-\xi_i\|^2
        =
        U\left\|z-\frac{\sum_i u_i\xi_i}{U}\right\|^2
        +
        \frac1U\sum_{1\le i<j\le d}u_i u_j\|\xi_i-\xi_j\|^2 .
\end{equation}
The weighted barycenter belongs to \(B\).  Therefore
\begin{equation}
        \int_B
        \exp\left(-\sum_{i=1}^d u_i\|z-\xi_i\|^2\right)\,dz
        =
        \left(\frac{\pi}{U}\right)^{(d-1)/2}
        \exp\left(
        -\frac1U\sum_{i<j}u_i u_j r_{ij}
        \right).
\end{equation}
Set \(\alpha=\beta-(d-1)/2>0\).  The gamma identity gives
\begin{equation}
        U^{-\alpha}
        =
        \Gamma(\alpha)^{-1}\int_0^\infty t^{\alpha-1}e^{-tU}\,dt .
\end{equation}
Multiplying the two identities gives the displayed representation.  Since all coefficients \(t+\|z-\xi_i\|^2\) are nonnegative, this is a positive Laplace representation in the variables \(u_i\).
\end{proof}

\section{The bounded-treewidth theorem}
\label{sec:bounded-treewidth}

The proof uses a stronger invariant than complete monotonicity.  Its Laplace coordinates are squared Euclidean distances between graph vertices.

\begin{definition}[Euclidean Laplace representation]
Let \(G=(V,E)\) be a finite connected graph and let \(\beta>0\).  We say that \(G\) has a Euclidean Laplace representation at exponent \(\beta\), abbreviated \(\ELR_\beta\), if there exist an integer \(N\ge0\) and a positive Borel measure \(\mu\) on \((\R^N)^V\) such that, for every \(x\in(0,\infty)^E\),
\begin{equation}
        T_G(x)^{-\beta}
        =
        \int_{(\R^N)^V}
        \exp\left(
        -\sum_{ij\in E}x_{ij}\|\xi_i-\xi_j\|^2
        \right)\,d\mu(\xi),
\end{equation}
with the integral finite for every positive \(x\).
\end{definition}

\begin{lemma}[\(\ELR_\beta\) implies complete monotonicity]
\label{lem:elr-cm}
If \(G\) has \(\ELR_\beta\), then \(T_G^{-\beta}\) is completely monotone on \((0,\infty)^E\).
\end{lemma}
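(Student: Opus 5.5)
The plan is to reduce the statement directly to \cref{prop:laplace-cm} by pushing the measure \(\mu\) forward to the edge-coordinate orthant. Define the measurable map
\begin{equation}
        \Phi:(\R^N)^V\to[0,\infty)^E,
        \qquad
        \Phi(\xi)=\bigl(\|\xi_i-\xi_j\|^2\bigr)_{ij\in E},
\end{equation}
which is continuous, hence Borel, and takes values in the closed orthant because squared distances are nonnegative. Let \(\nu=\Phi_*\mu\) be the image measure on \([0,\infty)^E\). It is a positive Borel measure, possibly infinite, but that is harmless.

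By the change-of-variables formula for image measures, applied to the nonnegative measurable function \(t\mapsto e^{-x\cdot t}\), we get
\begin{equation}
        \int_{[0,\infty)^E} e^{-x\cdot t}\,d\nu(t)
        =
        \int_{(\R^N)^V}\exp\Bigl(-\sum_{ij\in E}x_{ij}\|\xi_i-\xi_j\|^2\Bigr)\,d\mu(\xi)
        =
        T_G(x)^{-\beta}
\end{equation}
for every \(x\in(0,\infty)^E\). This value is finite, since \(T_G(x)>0\) for positive \(x\) when \(G\) is connected. The formula is valid for nonnegative integrands without any integrability assumption, so the finiteness hypothesis in the definition of \(\ELR_\beta\) is exactly the finiteness hypothesis of \cref{prop:laplace-cm}. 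Applying \cref{prop:laplace-cm} to \(\nu\) then shows that \(T_G^{-\beta}\) is completely monotone on \((0,\infty)^E\). In particular, smoothness is supplied by that proposition, although it is already clear here because \(T_G\) is a positive polynomial.

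There is no substantive obstacle. The only point needing care is that \(\mu\) need not be finite, and \(\nu\) may give infinite mass to neighbourhoods of \(0\), for instance through translation invariance in \(\xi\). Both issues are absorbed by the nonnegativity of the integrand together with the pointwise finiteness hypothesis, which is all that \cref{prop:laplace-cm} requires. Alternatively, one could bypass the pushforward and apply \cref{lem:closure}(3) directly, taking \(f_\xi(x)=\exp(-\sum x_{ij}\|\xi_i-\xi_j\|^2)\). Each \(f_\xi\) is a single Laplace kernel with nonnegative exponent vector, hence completely monotone, and integrating against \(\mu\) gives the claim.
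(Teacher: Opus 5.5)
Your proposal is correct and is essentially the paper's proof, which likewise reads the \(\ELR_\beta\) formula as a positive Laplace representation with coordinates \(t_{ij}(\xi)=\|\xi_i-\xi_j\|^2\ge0\) and invokes \cref{prop:laplace-cm}; your push-forward \(\nu=\Phi_*\mu\) simply makes that step explicit. One minor correction to your closing aside: finiteness at any single \(x\) gives \(\nu([0,R]^E)\le e^{R\sum_e x_e}\,T_G(x)^{-\beta}<\infty\), so \(\nu\) cannot put infinite mass on bounded neighbourhoods of \(0\), but this does not affect the argument.
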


\begin{proof}
The displayed formula is a positive Laplace representation in the edge variables, with Laplace coordinates
\begin{equation}
        t_{ij}(\xi)=\|\xi_i-\xi_j\|^2\ge0 .
\end{equation}
The result follows from \cref{prop:laplace-cm}.
\end{proof}

\begin{lemma}[Complete graphs have Euclidean Laplace representations]
\label{lem:complete-graph-elr}
For \(m\ge1\), the complete graph \(K_{m+1}\) has \(\ELR_\beta\) whenever
\begin{equation}
        \beta>\frac{m-1}{2}.
\end{equation}
Equivalently, \(K_r\) has \(\ELR_\beta\) whenever \(\beta>(r-2)/2\).
\end{lemma}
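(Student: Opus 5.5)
The plan is to start from the Riesz/Wishart representation of \cref{lem:riesz} applied to the reduced Laplacian of \(K_{m+1}\), and then rewrite the trace term as a sum of squared Euclidean distances. Label the vertices \(0,1,\ldots,m\) and root at \(0\). By \cref{prop:matrix-tree}, \(T_{K_{m+1}}(x)=\det M(x)\) with \(M(x)=\sum_{ij}x_{ij}b_{ij}b_{ij}^T\), an \(m\times m\) matrix that is positive definite for positive \(x\). Since \(\beta>(m-1)/2\), \cref{lem:riesz} gives
\begin{equation}
        T_{K_{m+1}}(x)^{-\beta}
        =\Gamma_m(\beta)^{-1}\int_{\Pi_m}
        \exp\Bigl(-\sum_{ij}x_{ij}\,b_{ij}^TYb_{ij}\Bigr)
        \det(Y)^{\beta-(m+1)/2}\,dY .
\end{equation}

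The key observation is that every \(Y\in\Pi_m\) is a Gram matrix. Writing \(Y=R^TR\) with \(R\in\R^{m\times m}\), say \(R=Y^{1/2}\), and letting \(\eta_1,\ldots,\eta_m\in\R^m\) be the columns of \(R\), set \(\xi_0=0\) and \(\xi_i=\eta_i\). Then \(b_{ij}^TYb_{ij}=\|R b_{ij}\|^2=\|\xi_i-\xi_j\|^2\) for every edge. This covers edges at the root, where \(b_{0i}=\pm e_i\) gives \(\|\xi_i\|^2=\|\xi_i-\xi_0\|^2\), as well as edges between nonroot vertices. So the exponent in the displayed integral is exactly \(-\sum_{ij\in E}x_{ij}\|\xi_i-\xi_j\|^2\), which is the form required in the definition of \(\ELR_\beta\), with \(N=m\).

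Next I define \(\mu\) on \((\R^m)^{V}\) as the pushforward of the measure \(\Gamma_m(\beta)^{-1}\det(Y)^{\beta-(m+1)/2}\,dY\) on \(\Pi_m\) under the map \(Y\mapsto(0,\,\text{columns of }Y^{1/2})\). This map is continuous, hence Borel, and \(\mu\) is a positive measure. The change of variables for pushforward measures then turns the Riesz integral into the \(\ELR_\beta\) integral. Finiteness for every positive \(x\) is inherited from \cref{lem:riesz}, because the integral equals \(\det M(x)^{-\beta}<\infty\). The equivalent reformulation follows by putting \(r=m+1\).

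For the degenerate case \(m=1\), the graph is \(K_2\) and the condition is \(\beta>0\). Here \(\Pi_1=(0,\infty)\) and the formula reduces to the gamma integral \(x^{-\beta}=\Gamma(\beta)^{-1}\int_0^\infty y^{\beta-1}e^{-xy}\,dy\), realized with \(\xi_0=0\) and \(\xi_1=\sqrt y\); this is consistent with the general argument. The only step that needs care is the bookkeeping of the Gram factorization against the incidence vectors, especially the edges at the root. Everything else is the direct substitution above, so I do not expect a real obstacle.
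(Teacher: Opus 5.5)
Your proposal is correct and is essentially the paper's proof. The paper roots \(K_{m+1}\) at \(0\), inserts the matrix-tree determinant into \cref{lem:riesz}, sets \(\xi_i(Y)=Y^{1/2}c_i\) with \(c_0=0\) and \(c_i=e_i\), which are exactly your columns of \(R=Y^{1/2}\), and pushes the Riesz measure forward to obtain the \(\ELR_\beta\) representation. Your remarks on Borel measurability, finiteness, and the \(m=1\) case are harmless additions.
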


\begin{proof}
Root \(K_{m+1}\) at vertex \(0\), set \(c_0=0\), and set \(c_i=e_i\in\R^m\) for \(1\le i\le m\).  For an edge \(ij\), put \(b_{ij}=c_i-c_j\).  By \cref{prop:matrix-tree},
\begin{equation}
        T_{K_{m+1}}(x)
        =
        \det\left(\sum_{0\le i<j\le m}x_{ij}b_{ij}b_{ij}^T\right).
\end{equation}
For \(Y\in\Pi_m\), choose the positive square root \(Y^{1/2}\) and define
\begin{equation}
        \xi_i(Y)=Y^{1/2}c_i\in\R^m .
\end{equation}
Then
\begin{equation}
        b_{ij}^TYb_{ij}
        =
        \|\xi_i(Y)-\xi_j(Y)\|^2 .
\end{equation}
Inserting the matrix-tree determinant into \cref{lem:riesz} gives
\begin{equation}
\begin{aligned}
        T_{K_{m+1}}(x)^{-\beta}
        &=
        \Gamma_m(\beta)^{-1}
        \int_{\Pi_m}
        \exp\left(
        -\sum_{i<j}x_{ij}\|\xi_i(Y)-\xi_j(Y)\|^2
        \right)  \\
        &\quad\times
        \det(Y)^{\beta-(m+1)/2}\,dY .
\end{aligned}
\end{equation}
This is an \(\ELR_\beta\) representation, after pushing the positive Riesz measure forward by \(Y\mapsto(\xi_i(Y))_{i=0}^m\).
\end{proof}

\begin{proposition}[Simplicial extension preserves \(\ELR_\beta\)]
\label{prop:simplicial-extension-elr}
Let \(H=(V,E_H)\) be a connected graph having \(\ELR_\beta\).  Let \(S\subseteq V\) be a clique of size \(d\ge1\), and let \(G\) be obtained from \(H\) by adjoining a new vertex \(v\) adjacent exactly to the vertices of \(S\).  If
\begin{equation}
        \beta>\frac{d-1}{2},
\end{equation}
then \(G\) has \(\ELR_\beta\).
\end{proposition}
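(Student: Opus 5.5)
We plan to combine the star--mesh identity (\cref{lem:starmesh}) with the Gaussian star kernel (\cref{lem:gaussian-star}) inside the given representation of \(H\). Write the edge variables of \(G\) as \((x,u)\), where \(x\in(0,\infty)^{E_H}\) and \(u=(u_i)_{i\in S}\) are the conductances of the new edges \(vi\). Since \(S\) is a clique, every terminal pair \(ij\) with \(i,j\in S\) is already an edge of \(H\). Hence \(x+p(u)\) is again a positive weight vector on \(E_H\); no new edges are created. By \cref{lem:starmesh},
\begin{equation}
        T_G(x,u)^{-\beta}=U^{-\beta}\,T_H(x+p(u))^{-\beta}.
\end{equation}

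Next we insert the \(\ELR_\beta\) representation of \(H\), evaluated at the positive vector \(x+p(u)\):
\begin{equation}
        T_G(x,u)^{-\beta}=\int_{(\R^N)^V}
        \exp\Bigl(-\sum_{ij\in E_H}x_{ij}\|\xi_i-\xi_j\|^2\Bigr)
        \,U^{-\beta}\exp\Bigl(-\frac1U\sum_{i<j\in S}u_iu_j\|\xi_i-\xi_j\|^2\Bigr)\,d\mu(\xi).
\end{equation}
For each fixed \(\xi\), the second factor is exactly the kernel \(K(u)\) of \cref{lem:gaussian-star}, built from the points \((\xi_i)_{i\in S}\) with \(r_{ij}=\|\xi_i-\xi_j\|^2\). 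The hypothesis \(\beta>(d-1)/2\) is precisely the condition of that lemma.

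We then replace \(K(u)\) by its integral formula. To obtain a representation of the required Euclidean form, we embed \(\R^N\) into \(\R^{N'}\) with \(N'=N+d\), say, so that there is room for a \((d-1)\)-dimensional affine subspace \(B(\xi)\) through the \(\xi_i\), \(i\in S\). We also absorb the auxiliary scalar \(t\) as an extra orthogonal coordinate. Concretely, set \(\eta_w=(\xi_w,0)\) for \(w\in V\) and \(\eta_v=(z,\sqrt t)\) in \(\R^{N'}\times\R\). Then
\begin{equation}
        \|\eta_v-\eta_i\|^2=t+\|z-\xi_i\|^2 \qquad (i\in S),
\end{equation}
and all distances among old vertices are unchanged. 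The integrand therefore becomes
\begin{equation}
        \exp\Bigl(-\sum_{ab\in E_G}x_{ab}\|\eta_a-\eta_b\|^2\Bigr),
\end{equation}
with the convention \(x_{vi}=u_i\). The measure \(\nu\) on \((\R^{N'+1})^{V\cup\{v\}}\) is the pushforward of
\begin{equation}
        \frac{\pi^{-(d-1)/2}}{\Gamma(\beta-(d-1)/2)}\,t^{\beta-(d-1)/2-1}\,dz\,dt\,d\mu(\xi)
\end{equation}
under \((\xi,z,t)\mapsto\eta\). This measure is positive, and Tonelli's theorem justifies all interchanges, since every integrand is nonnegative. Finiteness of the integral holds because it equals \(T_G(x,u)^{-\beta}<\infty\).

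The main obstacle is measurability, namely making the choice of \(B\) depend measurably on \(\xi\). When the \(\xi_i\), \(i\in S\), are affinely dependent, \(B\) is not unique. To avoid any selection, we will fix \(B\) explicitly. Embed \(\R^N\) as \(\R^N\times\{0\}\subset\R^N\times\R^{d}\), and let
\begin{equation}
        B(\xi)=\Bigl\{\xi_{i_1}+\sum_{k\ge2}\lambda_k\bigl((\xi_{i_k}-\xi_{i_1}, \epsilon_k)\bigr)\Bigr\}.
\end{equation}
Here the \(\epsilon_k\) are standard basis vectors of the added \(\R^d\); the extra directions guarantee that the spanning vectors are independent. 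Each \(\xi_i\) then lies in \(B(\xi)\) only if the extra components vanish. A cleaner alternative avoids this issue entirely: parametrize the affine subspace by \(z=\xi_{i_1}+\sum_k\lambda_k(\xi_{i_k}-\xi_{i_1})\) with \(\lambda\in\R^{d-1}\), and carry out the Gaussian computation in the \(\lambda\)-coordinates. This replaces \(dz\) by a Gram-determinant-weighted \(d\lambda\). I would first try the lifted version: place the points at \(\xi_i'=(\xi_i,\epsilon_i)\), which are affinely independent. Since these lifted points change the pairwise distances, one instead uses the identity from the proof of \cref{lem:gaussian-star} directly with the lifted \(B\) spanned by a fixed orthonormal frame depending continuously on \(\xi\), for example via Gram--Schmidt applied to \(\xi_{i_k}-\xi_{i_1}\) padded with \(\epsilon_k\). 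This yields a Borel map and completes the proof.
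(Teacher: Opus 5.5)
Your analytic skeleton matches the paper's: star--mesh elimination, recognition of the Gaussian star kernel, an extra coordinate \(\sqrt t\) for the new vertex, then push-forward and Tonelli. You also rightly note that, since \(S\) is a clique, \(p(u)\) lands on existing edges of \(H\), so the \(\ELR_\beta\) formula of \(H\) may be evaluated at \(x+p(u)\).

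The gap is in the step you call the main obstacle: none of your constructions of \(B(\xi)\) works. \cref{lem:gaussian-star} needs \(B\) to contain the actual clique points \((\xi_i,0)\), \(i\in S\), because its proof uses that the weighted barycenter \(\bar\xi_u=U^{-1}\sum_{i\in S}u_i\xi_i\) lies in \(B\) for every \(u\). For an arbitrary affine \((d-1)\)-plane one only has
\[
\int_B e^{-\sum_{i}u_i\|z-\xi_i\|^2}\,dz=\Bigl(\frac{\pi}{U}\Bigr)^{(d-1)/2}e^{-U\operatorname{dist}(\bar\xi_u,B)^2}\,e^{-U^{-1}\sum_{i<j}u_iu_jr_{ij}} .
\]
Consider the plane through \(\xi_{i_1}\) spanned by the padded vectors \((\xi_{i_k}-\xi_{i_1},\epsilon_k)\), or by their Gram--Schmidt orthonormalization (your final proposal). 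It contains the lifted points \((\xi_{i_k},\epsilon_k)\) but not \((\xi_{i_k},0)\), unless \(\xi_{i_k}=\xi_{i_1}\). So the factor \(e^{-U\operatorname{dist}(\bar\xi_u,B)^2}\) does not disappear, and the formula no longer reproduces \(T_G^{-\beta}\). Moving the old points to the lifted positions instead changes the distances on \(E_H\), as you observe. Your \(\lambda\)-parametrization with Gram-determinant weight is exact only on affinely independent configurations. On affinely dependent ones the weight vanishes and the \(v\)-integral returns \(0\) instead of \(K(u)\), and you give no argument that such configurations are \(\mu\)-null.

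The repair is to \emph{complete} the span of the true differences rather than perturb them. Run Gram--Schmidt on the list \(\xi_{i_2}-\xi_{i_1},\dots,\xi_{i_d}-\xi_{i_1},\epsilon_1,\dots,\epsilon_{d-1}\), discard each vector lying in the span of its predecessors, and keep the first \(d-1\) survivors. Their span contains every \(\xi_i-\xi_{i_1}\), so \(s\mapsto z_\xi(s)\) is an isometric parametrization of a valid \(B(\xi)\), with \(ds=dz\). The frame is Borel in \(\xi\): the discard pattern is cut out by closed rank conditions, and the frame is continuous on each stratum. This is the paper's ``Gram--Schmidt with deterministic tie-breaking by the standard basis''. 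With it, your push-forward of \(t^{\beta-(d-1)/2-1}\,dt\,ds\,d\mu(\xi)\) goes through verbatim.

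Alternatively, you can rescue the \(\lambda\)-route by proving that affinely dependent clique configurations are \(\mu\)-null. Kron-reducing the vertices outside \(S\) writes \(T_H\), in the clique-edge variables, as a positive factor times \(T_{K_S}\) at shifted conductances. Uniqueness of Laplace transforms, together with the absolutely continuous Riesz density available since \(\beta>(d-2)/2\), then shows that the clique distances put no mass on the Cayley--Menger variety. This is more work than the completed frame.
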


\begin{proof}
Let \(u_i=x_{vi}>0\) for \(i\in S\), and put \(U=\sum_{i\in S}u_i\).  By \cref{lem:starmesh},
\begin{equation}
        T_G(x,u)=U\,T_H(x+p(u)).
\end{equation}
Since \(H\) has \(\ELR_\beta\), there are \(N\) and a positive measure \(\mu\) on \((\R^N)^V\) such that
\begin{equation}
        T_H(y)^{-\beta}
        =
        \int
        \exp\left(
        -\sum_{ij\in E_H}y_{ij}\|\xi_i-\xi_j\|^2
        \right)\,d\mu(\xi).
\end{equation}
Substituting \(y=x+p(u)\) gives
\begin{equation}
\begin{aligned}
        T_G(x,u)^{-\beta}
        &=
        \int
        \exp\left(
        -\sum_{ij\in E_H}x_{ij}\|\xi_i-\xi_j\|^2
        \right)  \\
        &\quad\times
        U^{-\beta}
        \exp\left(
        -\frac1U
        \sum_{\substack{i<j\\ i,j\in S}}
        u_i u_j\|\xi_i-\xi_j\|^2
        \right)\,d\mu(\xi).
\end{aligned}
\end{equation}
For each fixed \(\xi\), the second line is the Gaussian star kernel of \cref{lem:gaussian-star}.

It remains only to record the measure-theoretic construction of the new configuration measure.  Put \(N_0=\max\{N,d-1\}\) and embed \(\R^N\) isometrically into \(\R^{N_0}\).  For each configuration \(\xi\), let \(a(\xi)=d^{-1}\sum_{i\in S}\xi_i\).  Apply Gram--Schmidt, with a fixed deterministic tie-breaking rule using the standard basis of \(\R^{N_0}\), to obtain measurably an orthonormal \((d-1)\)-frame \(e_1(\xi),\ldots,e_{d-1}(\xi)\) whose span contains the vectors \(\xi_i-a(\xi)\), \(i\in S\).  The deterministic tie-breaking is included only to make this a Borel choice of frame, so that the push-forward measure below is unambiguous.  Thus
\begin{equation}
        z_{\xi}(s)
        =
        a(\xi)+\sum_{\ell=1}^{d-1}s_\ell e_\ell(\xi),
        \qquad s\in\R^{d-1},
\end{equation}
parametrizes a \((d-1)\)-dimensional affine subspace containing the clique points.

Define a map to configurations in \((\R^{N_0+1})^{V\cup\{v\}}\) by
\begin{equation}
        \xi'_w=(\xi_w,0)\quad (w\in V),
        \qquad
        \xi'_v=(z_{\xi}(s),\sqrt t).
\end{equation}
Then, for \(i\in S\),
\begin{equation}
        \|\xi'_v-\xi'_i\|^2=t+\|z_{\xi}(s)-\xi_i\|^2,
\end{equation}
and old edge distances are unchanged.  Let \(\nu\) be the push-forward of
\begin{equation}
        d\mu(\xi)\,
        \frac{\pi^{-(d-1)/2}}{\Gamma(\beta-(d-1)/2)}
        t^{\beta-(d-1)/2-1}\,dt\,ds .
\end{equation}
All integrands are nonnegative, so Tonelli's theorem applies.  Substituting the Gaussian star-kernel formula gives
\begin{equation}
        T_G(x,u)^{-\beta}
        =
        \int
        \exp\left(
        -\sum_{ij\in E_H}x_{ij}\|\xi'_i-\xi'_j\|^2
        -\sum_{i\in S}u_i\|\xi'_v-\xi'_i\|^2
        \right)\,d\nu(\xi').
\end{equation}
This is precisely an \(\ELR_\beta\) representation for \(G\).
\end{proof}

\begin{proposition}[Chordal bounded-clique graphs]
\label{prop:chordal-elr}
Let \(H\) be a connected chordal graph with clique number \(\omega(H)\le k+1\).  Then \(H\) has \(\ELR_\beta\) for every
\begin{equation}
        \beta>\frac{k-1}{2}.
\end{equation}
\end{proposition}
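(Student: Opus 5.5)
The proof will go by induction along a perfect elimination ordering of \(H\), read backwards so that \(H\) is built up one simplicial vertex at a time. Since \(H\) is chordal, it has a perfect elimination ordering \(v_1,\ldots,v_n\): for each \(\ell\), the neighbours of \(v_\ell\) among \(v_{\ell+1},\ldots,v_n\) form a clique. Set \(H_\ell=H[\{v_\ell,\ldots,v_n\}]\).

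\textbf{Choosing the ordering.} I will choose the ordering so that every \(H_\ell\) is connected and the starting graph \(H_{n'}\) is a single maximal clique. To do this, I take a clique tree of \(H\) and repeatedly remove a leaf bag. Along the way I eliminate the vertices of that bag not lying in its separator with the neighbouring bag. Each such vertex is simplicial in the current graph, since its neighbourhood lies in the leaf bag. Removing it keeps the graph connected, because the remaining graph is again a connected chordal graph with clique tree given by the remaining bags.

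An alternative with the same effect uses two standard facts. Every connected chordal graph that is not complete has two nonadjacent simplicial vertices. Deleting a simplicial vertex preserves both chordality and connectivity (when \(n\ge2\)), since any path through it can be rerouted through its clique neighbourhood. Repeatedly deleting simplicial vertices until a complete graph \(K_r\) remains then gives the required ordering.

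\textbf{Base case.} The final clique \(K_r\) has \(r\le\omega(H)\le k+1\). By \cref{lem:complete-graph-elr} it has \(\ELR_\beta\) whenever \(\beta>(r-2)/2\). This condition holds because \((r-2)/2\le (k-1)/2<\beta\). If \(r=1\), so that \(H\) is a single vertex, \(T_H=1\) and the Dirac measure gives a trivial representation.

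\textbf{Inductive step.} Passing from \(H_{\ell+1}\) to \(H_\ell\) adjoins \(v_\ell\) adjacent exactly to a clique \(S\) of \(H_{\ell+1}\). The set \(S\cup\{v_\ell\}\) is a clique of \(H\), so \(d=|S|\le k\). Connectivity of \(H_\ell\) ensures \(d\ge1\). Therefore \(\beta>(k-1)/2\ge(d-1)/2\), and \cref{prop:simplicial-extension-elr} transfers \(\ELR_\beta\) from \(H_{\ell+1}\) to \(H_\ell\). Iterating this step up to \(H_1=H\) finishes the proof.

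\textbf{Main obstacle.} The only delicate point is the combinatorial one: guaranteeing at every stage of the ordering both that the intermediate graphs stay connected and that each eliminated neighbourhood has size at most \(k\). Both follow from the chordal facts above together with the clique bound \(\omega(H)\le k+1\). The analytic content is already contained in \cref{lem:complete-graph-elr} and \cref{prop:simplicial-extension-elr}.
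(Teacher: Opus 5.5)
Your proof is correct and follows the paper's argument: you build \(H\) from a base clique \(K_r\) with \(r\le k+1\) by simplicial extensions along a perfect elimination ordering, use \cref{lem:complete-graph-elr} (or the trivial Dirac representation when \(r=1\)) for the base, and apply \cref{prop:simplicial-extension-elr} with \(d=|S|\le k\) at each step. Your explicit check that every intermediate graph stays connected and that \(d\ge1\) supplies hypotheses of \cref{prop:simplicial-extension-elr} that the paper leaves implicit; these hold for any perfect elimination ordering, since deleting a simplicial vertex never disconnects a graph, so the clique-tree construction is not needed.
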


\begin{proof}
A chordal graph has a perfect elimination ordering \cite{FulkersonGross1965}.  Equivalently, it may be built by starting with a clique \(K_r\), \(1\le r\le k+1\), and repeatedly adding a new vertex adjacent to a clique \(S\) in the graph already built.  At each extension step, \(|S|\le k\), because the new vertex together with \(S\) is a clique of the final graph.

If \(r=1\), the base graph has spanning-tree polynomial \(1\).  If \(r\ge2\), write \(r=m+1\).  Since \(m\le k\), the assumption \(\beta>(k-1)/2\) implies \(\beta>(m-1)/2\), so \cref{lem:complete-graph-elr} gives \(\ELR_\beta\) for the base clique.  At each subsequent extension, \(|S|=d\le k\), so \(\beta>(k-1)/2\ge(d-1)/2\), and \cref{prop:simplicial-extension-elr} preserves \(\ELR_\beta\).  Induction over the construction proves the proposition.
\end{proof}

\begin{lemma}[Deleting edges preserves \(\ELR_\beta\)]
\label{lem:edge-deletion-elr}
Let \(\widehat G=(V,\widehat E)\) have \(\ELR_\beta\), and let \(G=(V,E)\) be a connected spanning subgraph with \(E\subseteq\widehat E\).  Then \(G\) has \(\ELR_\beta\).
\end{lemma}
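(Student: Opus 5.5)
The idea is to send the conductances of the deleted edges to zero and pass to the limit inside the $\ELR_\beta$ representation of $\widehat G$. Write $F=\widehat E\setminus E$. Extend $x\in(0,\infty)^E$ by $y_f=\epsilon>0$ for $f\in F$. The representation of $\widehat G$ then gives
\begin{equation}
        T_{\widehat G}(x,\epsilon)^{-\beta}
        =
        \int
        \exp\Bigl(-\sum_{ij\in E}x_{ij}\|\xi_i-\xi_j\|^2\Bigr)
        \exp\Bigl(-\epsilon\sum_{ij\in F}\|\xi_i-\xi_j\|^2\Bigr)\,d\mu(\xi).
\end{equation}
The plan is to let $\epsilon\downarrow0$ on both sides.

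For the left side, $T_{\widehat G}$ is a polynomial with nonnegative coefficients. Setting all variables of $F$ to zero kills exactly the spanning trees that use an edge of $F$. What survives is the sum over spanning trees of $\widehat G$ contained in $E$, which is $T_G(x)$. Because $G$ is connected, $T_G(x)>0$, and by continuity $T_{\widehat G}(x,\epsilon)^{-\beta}\to T_G(x)^{-\beta}$, a finite limit.

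For the right side, the second exponential factor increases to $1$ pointwise as $\epsilon\downarrow0$. The integrand is nonnegative and increases monotonically, so the monotone convergence theorem shows the integral converges to
\begin{equation}
        \int\exp\Bigl(-\sum_{ij\in E}x_{ij}\|\xi_i-\xi_j\|^2\Bigr)\,d\mu(\xi).
\end{equation}
Equating the two limits gives
\begin{equation}
        T_G(x)^{-\beta}=\int\exp\Bigl(-\sum_{ij\in E}x_{ij}\|\xi_i-\xi_j\|^2\Bigr)\,d\mu(\xi),
\end{equation}
with the same measure $\mu$ and the same $N$. Finiteness for every positive $x$ comes for free because the left side is finite. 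This is exactly $\ELR_\beta$ for $G$.

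No step is a real obstacle. The only points needing care are justifying the limit interchange without a dominating function, which monotone convergence handles since all integrands are nonnegative and increasing, and using connectedness of $G$ so that the limit $T_G(x)^{-\beta}$ is finite rather than $+\infty$.
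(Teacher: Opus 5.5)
Your proposal is correct and matches the paper's proof essentially step for step: you set the completion-edge conductances to $\varepsilon$, use the limit $T_{\widehat G}(x,\varepsilon\mathbf 1)\to T_G(x)>0$ (using connectedness of $G$), and apply monotone convergence to the pointwise increasing integrand. You keep the same measure $\mu$, and finiteness is inherited from the left side.
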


\begin{proof}
Write \(F=\widehat E\setminus E\).  Let \(x=(x_e)_{e\in E}\), and give every edge in \(F\) conductance \(\varepsilon>0\).  Since \(\widehat G\) has \(\ELR_\beta\),
\begin{equation}
        T_{\widehat G}(x,\varepsilon\mathbf 1)^{-\beta}
        =
        \int
        \exp\left(
        -\sum_{e\in E}x_e\|\xi_i-\xi_j\|^2
        -\varepsilon\sum_{f\in F}\|\xi_i-\xi_j\|^2
        \right)\,d\mu(\xi).
\end{equation}
As \(\varepsilon\downarrow0\),
\begin{equation}
        T_{\widehat G}(x,\varepsilon\mathbf 1)\longrightarrow T_G(x),
\end{equation}
because the spanning trees using completion edges receive a factor tending to zero, while the spanning trees using only \(E\) are exactly the spanning trees of \(G\).  Since \(G\) is connected, \(T_G(x)>0\).  On the integral side the integrand increases pointwise to
\begin{equation}
        \exp\left(-\sum_{e\in E}x_e\|\xi_i-\xi_j\|^2\right).
\end{equation}
Monotone convergence gives the desired \(\ELR_\beta\) representation for \(G\).
\end{proof}

\begin{theorem}[Main theorem: bounded treewidth]
\label{thm:bounded-treewidth}
Let \(G=(V,E)\) be a finite connected simple graph.  If \(\tw(G)\le k\), then for every \(\beta>0\) satisfying
\begin{equation}
        \beta>\frac{k-1}{2},
\end{equation}
the function \(T_G^{-\beta}\) is completely monotone on \((0,\infty)^E\).
\end{theorem}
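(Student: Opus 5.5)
The plan is to reduce the theorem to the chordal case already handled in \cref{prop:chordal-elr}, and then pass back to \(G\) using \cref{lem:edge-deletion-elr} and \cref{lem:elr-cm}.

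\textbf{Step 1: chordal completion.} Since \(\tw(G)\le k\), the standard characterization of treewidth yields a chordal supergraph \(\widehat G=(V,\widehat E)\) on the same vertex set with \(E\subseteq\widehat E\) and \(\omega(\widehat G)\le k+1\). Concretely, fix a tree decomposition of width at most \(k\) and make each bag a clique. The resulting graph is chordal, and its maximal cliques are contained in bags, so they have size at most \(k+1\). It is connected because it contains the connected graph \(G\) as a spanning subgraph, and it is simple by construction.

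\textbf{Step 2: ELR for the completion, then deletion.} \Cref{prop:chordal-elr} applies to \(\widehat G\) and gives \(\ELR_\beta\) for every \(\beta>(k-1)/2\). \Cref{lem:edge-deletion-elr}, applied with \(F=\widehat E\setminus E\), transfers \(\ELR_\beta\) to the connected spanning subgraph \(G\). \Cref{lem:elr-cm} then gives complete monotonicity of \(T_G^{-\beta}\) on \((0,\infty)^E\).

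\textbf{Edge cases.} The case \(k=0\) needs no special treatment. Then \(G\) is a single vertex, \(T_G\equiv1\), and the conclusion is trivial for every \(\beta>0\). The condition \(\beta>0\) in the statement only matters when \(k\le1\), where \((k-1)/2\le0\). For \(k=1\), \(\widehat G\) is a tree, and \(\ELR_\beta\) holds for every \(\beta>0\) by the chordal proposition: the base clique \(K_2\) needs \(\beta>0\), and the pendant extensions with \(d=1\) need \(\beta>0\).

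\textbf{Main obstacle.} The only non-routine point is Step 1, namely checking that the completion respects the exact clique bound \(k+1\) and preserves the vertex set. This is the standard equivalence between treewidth and minimal chordal completions, and I would cite it rather than reprove it. The analytic content has all been absorbed into the earlier propositions.
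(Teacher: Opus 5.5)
Your proposal is correct and follows the paper's proof essentially step for step: a chordal completion \(\widehat G\) on the same vertex set with \(\omega(\widehat G)\le k+1\) (the paper simply cites Bodlaender), then \cref{prop:chordal-elr}, \cref{lem:edge-deletion-elr}, and \cref{lem:elr-cm}. Your explicit bag-filling construction of the completion and your treatment of the cases \(k\le1\) are accurate additions, but the paper's argument already covers them, since \cref{prop:chordal-elr} handles the single-vertex and \(K_2\) base cases directly.
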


\begin{proof}
By the chordal-completion characterization of treewidth \cite{Bodlaender1998}, there is a chordal graph \(\widehat G=(V,\widehat E)\) on the same vertex set such that \(E\subseteq\widehat E\) and \(\omega(\widehat G)\le k+1\).  By \cref{prop:chordal-elr}, \(\widehat G\) has \(\ELR_\beta\) for every \(\beta>(k-1)/2\).  By \cref{lem:edge-deletion-elr}, \(G\) has \(\ELR_\beta\).  Finally, \cref{lem:elr-cm} implies that \(T_G^{-\beta}\) is completely monotone.
\end{proof}

\begin{corollary}[Partial \(3\)-trees]
\label{cor:partial-3-trees}
If \(\tw(G)\le3\), then \(T_G^{-\beta}\) is completely monotone for every \(\beta>1\).  Hence every partial \(3\)-tree belongs to \(\calG_\beta\) throughout the interval \(1<\beta<3/2\).
\end{corollary}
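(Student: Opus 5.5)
This is a direct specialization of \cref{thm:bounded-treewidth} to \(k=3\). The plan is to apply the theorem with \(k=3\) and then interpret the conclusion in the language of the Scott--Sokal classes.

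First, let \(G\) be a finite connected simple graph with \(\tw(G)\le 3\). With \(k=3\), the threshold in \cref{thm:bounded-treewidth} is
\begin{equation*}
        \frac{k-1}{2}=1 .
\end{equation*}
So \(T_G^{-\beta}\) is completely monotone on \((0,\infty)^E\) for every \(\beta>1\). Spelling out the chain: we take a chordal completion \(\widehat G\) with \(\omega(\widehat G)\le 4\). \Cref{prop:chordal-elr} then gives \(\ELR_\beta\) for \(\widehat G\), since the base clique has \(m\le 3\) and each simplicial extension has \(|S|\le 3\), and \((3-1)/2=1<\beta\). \Cref{lem:edge-deletion-elr} transfers \(\ELR_\beta\) to \(G\), and \cref{lem:elr-cm} yields complete monotonicity.

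Second, we note that partial \(3\)-trees are exactly the graphs of treewidth at most \(3\), namely the subgraphs of \(3\)-trees. A connected partial \(3\)-tree therefore satisfies the hypothesis, and by definition of \(\calG_\beta\) it lies in \(\calG_\beta\) for every \(\beta>1\). In particular this holds on the open interval \(1<\beta<3/2\).

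There is no substantive obstacle; the only bookkeeping points are these. The statement implicitly assumes \(G\) is connected, as required by the definition of \(\calG_\beta\) and by \cref{thm:bounded-treewidth}. Multigraph edge parallels do not arise, since partial \(3\)-trees here are simple.
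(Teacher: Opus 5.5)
Your proposal is correct and matches the paper's proof, which is simply \cref{thm:bounded-treewidth} specialized to \(k=3\), with threshold \((k-1)/2=1\). Your unpacking of the chordal-completion, \(\ELR_\beta\), and edge-deletion chain accurately expands that theorem's proof and adds nothing the corollary needs.
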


\begin{proof}
Apply \cref{thm:bounded-treewidth} with \(k=3\).
\end{proof}

\begin{corollary}[Parallel star-lifts over triangle and \(K_4\) cores]
\label{cor:parallel-star-lifts}
Let \(G\) be obtained from a core \(R\) of size \(3\) or \(4\) by adjoining any finite set of pairwise nonadjacent vertices, each adjacent to exactly three vertices of \(R\), and assume \(G\) is connected.  Then \(T_G^{-\beta}\) is completely monotone for every \(\beta>1\).
\end{corollary}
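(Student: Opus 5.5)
The plan is to reduce \cref{cor:parallel-star-lifts} to \cref{cor:partial-3-trees}; concretely, \(G\) will be exhibited as a spanning subgraph of a chordal graph \(\widehat G\) with \(\omega(\widehat G)\le 4\). Then \cref{prop:chordal-elr} (with \(k=3\)) gives \(\ELR_\beta\) for \(\widehat G\) whenever \(\beta>1\). Because \(G\) is connected by hypothesis, \cref{lem:edge-deletion-elr} transfers \(\ELR_\beta\) to \(G\). Finally \cref{lem:elr-cm} yields complete monotonicity of \(T_G^{-\beta}\). Equivalently, it suffices to show \(\tw(G)\le 3\) and invoke \cref{thm:bounded-treewidth}. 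I will write the construction out explicitly, since it is short and avoids citing the treewidth characterization.

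Construct \(\widehat G\) as follows. Put on \(R\) all \(\binom{|R|}{2}\) edges, so that \(R\) becomes a clique \(K_3\) or \(K_4\). Keep every edge of \(G\) between an added vertex and \(R\). Edges of \(G\) inside \(R\) are a subset of this clique. The added vertices are pairwise nonadjacent by hypothesis, so every edge of \(G\) is either inside \(R\) or joins an added vertex to \(R\). Hence \(E(G)\subseteq E(\widehat G)\) on the same vertex set.

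Next I will check chordality and the clique bound through the construction order used in \cref{prop:chordal-elr}. Start from the clique on \(R\), which has size \(r\in\{3,4\}\le 4\). Then adjoin the added vertices one at a time. Each new vertex \(v\) is adjacent exactly to a \(3\)-subset \(S_v\subseteq R\), and \(S_v\) is a clique of size \(d=3\) in the graph already built. Reversing this order gives a perfect elimination ordering, so \(\widehat G\) is chordal. Every clique of \(\widehat G\) lies either in \(R\) or in some \(\{v\}\cup S_v\), since added vertices are mutually nonadjacent. Hence \(\omega(\widehat G)\le 4\). One could even bypass the chordality language and apply \cref{lem:complete-graph-elr} (with \(m=r-1\le 3\), needing \(\beta>(m-1)/2\), which holds since \(\beta>1\)). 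After that, apply \cref{prop:simplicial-extension-elr} once per added vertex with \(d=3\), needing \(\beta>1\).

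I expect no serious obstacle. The only points needing care are these: the intermediate graphs must be connected, which holds because each starts from a clique of size at least \(3\) and adds vertices attached to it; and connectivity of \(G\) itself is needed in the deletion lemma, where it is assumed. The threshold \(\beta>1\) is exactly what both the base-clique Riesz step for \(K_4\) and the degree-\(3\) Gaussian star kernel of \cref{lem:gaussian-star} require.
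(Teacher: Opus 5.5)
Your proposal is correct and is essentially the paper's own argument. Like the paper, you complete \(R\) to a clique, add each extra vertex as a simplicial vertex on a triangle of that clique to obtain a chordal supergraph with clique number at most \(4\), and conclude via \cref{cor:partial-3-trees} (equivalently, via \cref{prop:chordal-elr}, \cref{lem:edge-deletion-elr} and \cref{lem:elr-cm}). Your explicit checks of \(E(G)\subseteq E(\widehat G)\), of the clique bound, and of where connectivity of \(G\) is used only spell out steps the paper compresses into one line.
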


\begin{proof}
Add all missing edges inside \(R\).  The resulting graph is chordal with clique number at most \(4\): start with the complete core on \(R\), and add each new vertex as a simplicial vertex adjacent to a clique of size \(3\).  Hence \(\tw(G)\le3\), and \cref{cor:partial-3-trees} applies.  Stronger effective-core connectivity hypotheses are sufficient, but the bounded-treewidth argument only needs the connectedness assumed here.
\end{proof}

\begin{remark}[Boundary of the method]
\label{rem:method-boundary}
The proof is tied to the ordinary real Riesz threshold.  A clique of size \(k+1\) produces a reduced determinant of size \(k\), and the ordinary positive Riesz density requires \(\beta>(k-1)/2\).  Thus, in the interval \(1<\beta<3/2\), the method reaches all partial \(3\)-trees but not genuine treewidth-\(4\) cores.  It does not classify \(\calG_\beta\) in this interval, nor does it imply that every \(K_5\)-minor-free graph belongs to \(\calG_\beta\).  The endpoint \(\beta=(k-1)/2\), including \(\beta=1\) for partial \(3\)-trees, requires a separate boundary-Riesz analysis.
\end{remark}

\section{Application: finite Apollonian networks}
\label{sec:apollonian}

Finite Apollonian networks give a useful infinite application class from statistical-physics graph geometry.  Andrade, Herrmann, Andrade, and da Silva introduced them as scale-free, small-world, Euclidean graphs motivated by physical and infrastructure networks \cite{AndradeHerrmannAndradeDaSilva2005}.  We use that literature only for the graph family: the conclusion below concerns the Kirchhoff determinant, not the distinct Ising, Potts, percolation, or conduction models studied on the same graphs.

A finite Apollonian network is obtained from a plane triangle by repeatedly selecting a triangular face, inserting a new vertex into that face, and joining the new vertex to the three boundary vertices of the face.  These graphs are planar \(3\)-trees, also called stacked triangulations \cite{BergoldEtAl2024}.  Hence every finite Apollonian network \(A\) satisfies
\begin{equation}
        \tw(A)\le3.
\end{equation}

\begin{figure}[h!]
\centering
\begin{tikzpicture}[
    scale=2.25,
    vertex/.style={circle,draw,inner sep=1.2pt,minimum size=18pt,font=\scriptsize},
    genzero/.style={line width=0.75pt},
    genone/.style={line width=0.62pt},
    gentwo/.style={line width=0.48pt}
]
  \node[vertex] (v1) at (0,1.20) {$1$};
  \node[vertex] (v2) at (-1.039,-0.60) {$2$};
  \node[vertex] (v3) at (1.039,-0.60) {$3$};
  \node[vertex] (v0) at (0,0) {$0$};
  \node[vertex] (v12) at (-0.346,0.20) {$12$};
  \node[vertex] (v23) at (0,-0.40) {$23$};
  \node[vertex] (v31) at (0.346,0.20) {$31$};

  \draw[genone] (v0)--(v1);
  \draw[genone] (v0)--(v2);
  \draw[genone] (v0)--(v3);

  \draw[gentwo] (v12)--(v1);
  \draw[gentwo] (v12)--(v2);
  \draw[gentwo] (v12)--(v0);
  \draw[gentwo] (v23)--(v2);
  \draw[gentwo] (v23)--(v3);
  \draw[gentwo] (v23)--(v0);
  \draw[gentwo] (v31)--(v3);
  \draw[gentwo] (v31)--(v1);
  \draw[gentwo] (v31)--(v0);

  \draw[genzero] (v1)--(v2);
  \draw[genzero] (v2)--(v3);
  \draw[genzero] (v3)--(v1);
\end{tikzpicture}
\caption{A second-generation finite Apollonian network.  Each new vertex is inserted into a triangular face and joined to its three boundary vertices.  The nested insertions are planar \(3\)-tree stacking steps beyond a fixed-core parallel star-lift construction.}
\label{fig:apollonian-network}
\end{figure}

\begin{corollary}[Apollonian networks]
\label{cor:apollonian}
Let \(A\) be a finite connected Apollonian network.  Then, for every \(\beta>1\), the function \(T_A^{-\beta}\) is completely monotone on \((0,\infty)^{E(A)}\).  Hence
\begin{equation}
        A\in\calG_\beta
        \qquad\text{for every}\qquad
        1<\beta<\frac32 .
\end{equation}
\end{corollary}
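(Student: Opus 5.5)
The plan is to deduce the corollary from \cref{cor:partial-3-trees} (equivalently \cref{thm:bounded-treewidth} with \(k=3\)). The work is to show \(\tw(A)\le3\), which we prove directly through the chordal construction of \cref{prop:chordal-elr} instead of relying on the cited identification of Apollonian networks with planar \(3\)-trees.

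First, induct on the number of insertion steps and show that every finite Apollonian network \(A\) is chordal with \(\omega(A)\le4\). The induction also carries the invariant that every triangular face of the current plane embedding spans a clique (a triangle) of \(A\). The base case is the triangle \(K_3\): it is chordal, has clique number \(3\), and both faces are bounded by its three vertices. For the inductive step, a new vertex \(v\) is inserted into a triangular face with boundary \(S=\{a,b,c\}\) and joined to \(a,b,c\). By the invariant, \(S\) is a clique of size \(3\), so \(v\) is a simplicial vertex attached to a clique of size \(d=3\). This is exactly the extension step used in the proof of \cref{prop:chordal-elr}. The old face is replaced by the three faces \(vab,vbc,vca\), each spanning a triangle, so the invariant persists. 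Any clique containing \(v\) lies in \(\{v\}\cup S\), so the clique number stays at most \(4\).

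Second, reading the construction in reverse gives a perfect elimination ordering. Hence \(A\) is chordal with \(\omega(A)\le4\), and \(\tw(A)=\omega(A)-1\le3\). The same conclusion is available more directly: apply \cref{prop:chordal-elr} with \(k=3\) to get \(\ELR_\beta\) for all \(\beta>1\), then apply \cref{lem:elr-cm}. Either route also needs that \(A\) is connected and simple. Connectedness is assumed, and it is automatic anyway because each new vertex is adjacent to existing vertices. Simplicity holds because each insertion creates only edges from the new vertex, so no parallel edges arise.

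Finally, complete monotonicity for \(\beta>1\) restricts in particular to \(1<\beta<3/2\), so \(A\in\calG_\beta\) on that interval. The only point needing care is the face-clique invariant, and it is routine. No step is a real obstacle, since the analytic content is entirely contained in \cref{prop:simplicial-extension-elr}.
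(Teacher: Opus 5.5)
Your proof is correct and follows the paper's route. The paper cites the identification of finite Apollonian networks with planar $3$-trees to get $\tw(A)\le3$ and then applies \cref{cor:partial-3-trees}; you prove that same bound directly, via the face-clique induction showing that each insertion is a simplicial extension onto a triangle, and your alternative of feeding the construction straight into \cref{prop:chordal-elr} with $k=3$ is equally valid, since it is exactly the chordal case of the paper's main-theorem argument.
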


\begin{proof}
Finite Apollonian networks are planar \(3\)-trees, hence have treewidth at most \(3\).  Apply \cref{cor:partial-3-trees}.
\end{proof}

In determinant form, if \(L_A^{(r)}(x)\) is a reduced weighted Laplacian, then \cref{prop:matrix-tree} gives
\begin{equation}
        T_A(x)=\det L_A^{(r)}(x).
\end{equation}
Thus, writing \(q=2\beta\), the rooted fractional Kirchhoff determinant
\begin{equation}
        Z_{q,A}(x)
        :=
        \det L_A^{(r)}(x)^{-q/2}
        =
        T_A(x)^{-q/2}
\end{equation}
is completely monotone for every \(q>2\).  For integer \(q\), this is the rooted \(q\)-component Gaussian-field determinant on \(A\) \cite{LeJan2011}:
\begin{equation}
        \int_{\phi_r=0}
        \exp\left[
        -\frac12\sum_{ij\in E(A)}x_{ij}\|\phi_i-\phi_j\|^2
        \right]\prod_{v\ne r}d\phi_v
        \propto
        \det L_A^{(r)}(x)^{-q/2}.
\end{equation}
For noninteger \(q>2\), complete monotonicity gives a positive substitute for this analytic continuation: for every \(x>0\), there is a probability law \(\mathbb P_x\) on nonnegative edge-energy variables \(S=(S_e)_{e\in E(A)}\) such that, for every \(h\ge0\),
\begin{equation}
        \frac{Z_{q,A}(x+h)}{Z_{q,A}(x)}
        =
        \mathbb E_x\left[
        \exp\left(-\sum_{e\in E(A)}h_eS_e\right)
        \right].
\end{equation}
The point is a positive fractional Kirchhoff determinant theory on Apollonian geometry, not a claim about the different Ising, Potts, percolation, sandpile, resistor-network, or quantum-walk models and observables studied on the same graphs.

\section{Application: \texorpdfstring{$K_5-e$}{K5-e}}
\label{sec:k5e}

The graph \(K_5-e\), labelled as in the left panel of \cref{fig:test-graphs}, is a partial \(3\)-tree, so it is an immediate application of \cref{thm:bounded-treewidth}.  We nevertheless spell out the star--mesh factorization because it is the smallest nontrivial instance of two parallel apices acting on the same core and gives a concrete model for the general mechanism.

Let $G=K_5-e$.  Realize $G$ as two nonadjacent apices $s,t$ joined to a terminal triangle with vertices $1,2,3$; equivalently, the missing edge in the complete graph on $\{s,t,1,2,3\}$ is $st$.  All edge variables in this section are positive.  Write
\begin{equation}
        a_i=x_{si},\qquad b_i=x_{ti},\qquad 1\le i\le3,
\end{equation}
and write
\begin{equation}
        c=(c_{12},c_{13},c_{23})=(x_{12},x_{13},x_{23}).
\end{equation}
Let
\begin{equation}
        A=a_1+a_2+a_3,
        \qquad
        B=b_1+b_2+b_3,
\end{equation}
and for $w=(w_{12},w_{13},w_{23})$ define the triangle spanning-tree polynomial
\begin{equation}
        E(w)=w_{12}w_{13}+w_{12}w_{23}+w_{13}w_{23}.
\end{equation}

\begin{lemma}[Star--mesh factorization for $K_5-e$]
\label{lem:k5e-factor}
With the notation above,
\begin{equation}
        T_{K_5-e}(a,b,c)=AB\,E\bigl(c+p(a)+p(b)\bigr).
\end{equation}
\end{lemma}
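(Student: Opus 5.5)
The plan is to apply \cref{lem:starmesh} twice, eliminating the two apices \(s\) and \(t\) one after the other. Both have degree \(3\) and are adjacent exactly to the terminal set \(S=\{1,2,3\}\).

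First, let \(H_1\) be the graph obtained from \(G=K_5-e\) by deleting \(s\). Since \(s\) and \(t\) are nonadjacent, \(H_1\) is the triangle on \(\{1,2,3\}\) together with the apex \(t\) joined to all three terminals. It is connected, and \(G\) arises from \(H_1\) by adjoining \(s\) with conductances \(a_1,a_2,a_3\). \Cref{lem:starmesh} therefore gives
\begin{equation}
        T_G(a,b,c)=A\,T_{H_1}\bigl(b,\;c+p(a)\bigr).
\end{equation}
The mesh conductances \(p(a)\) are added to the existing triangle edges, and the edges \(t i\), which do not involve \(s\), keep their weights \(b_i\).

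Second, apply \cref{lem:starmesh} to \(H_1\), now regarded as the triangle \(H_2\) on \(\{1,2,3\}\) with edge weights \(w=c+p(a)\), extended by the vertex \(t\) with conductances \(b_i\). This gives
\begin{equation}
        T_{H_1}(b,w)=B\,T_{H_2}\bigl(w+p(b)\bigr).
\end{equation}
The spanning-tree polynomial of a triangle with weights \(w_{12},w_{13},w_{23}\) is the sum of products over pairs of edges, which is exactly \(E(w)\). Combining the two displays yields \(T_G=AB\,E\bigl(c+p(a)+p(b)\bigr)\).

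No step here is a real obstacle. The only point needing care is bookkeeping in the first elimination: one must check that \(H_1\) is connected, which holds because it contains the triangle and \(B>0\). One must also check that in \(H_1\) the edges at \(t\) are untouched, so that the second application sees the updated triangle weights \(c+p(a)\). Both follow directly from the statement of \cref{lem:starmesh}, which modifies only the terminal edges inside \(S\).
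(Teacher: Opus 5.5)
Your proposal is correct and follows the paper's proof: you apply \cref{lem:starmesh} twice, first eliminating $s$ and then $t$, and then recognize $E$ as the spanning-tree polynomial of the weighted triangle. One small point: $H_1$ is connected for purely combinatorial reasons (the triangle is connected and $t$ is adjacent to it), so the appeal to $B>0$ is unnecessary.
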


\begin{proof}
Apply \cref{lem:starmesh} first to the apex $s$ and then to the apex $t$.  Eliminating $s$ multiplies the spanning-tree polynomial by $A$ and adds the triangle conductances $p(a)$ to the terminal triangle.  Eliminating $t$ multiplies by $B$ and adds $p(b)$.  The remaining graph is the terminal triangle with edge-vector $c+p(a)+p(b)$, whose spanning-tree polynomial is $E$.
\end{proof}

\begin{lemma}[Riesz representation for the triangle]
\label{lem:triangle-riesz}
For $w_{12},w_{13},w_{23}>0$ and $\beta>1/2$,
\begin{equation}
\begin{aligned}
 E(w)^{-\beta}
 &=
 \Gamma_2(\beta)^{-1}
 \int_{\Pi_2}
 \exp\{-w_{12}r_{12}(Y)-w_{13}r_{13}(Y)-w_{23}r_{23}(Y)\} \\
 &\hspace{35mm}\times
 \det(Y)^{\beta-3/2}\,dY,
\end{aligned}
\end{equation}
where
\begin{equation}
        r_{12}(Y)=Y_{11}+Y_{22}-2Y_{12},
        \qquad
        r_{13}(Y)=Y_{11},
        \qquad
        r_{23}(Y)=Y_{22}.
\end{equation}
Moreover, for each $Y\in\Pi_2$, the triple $(r_{12},r_{13},r_{23})$ is a squared-distance triple of three points in $\R^2$.
\end{lemma}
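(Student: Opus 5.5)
This is the case $m=2$ of \cref{lem:riesz}, combined with the matrix-tree determinant of \cref{prop:matrix-tree} for the triangle, exactly as in the proof of \cref{lem:complete-graph-elr}. Root the triangle on $\{1,2,3\}$ at vertex $3$, and index the reduced Laplacian by the vertices $1,2$. The reduced incidence vectors are
\begin{equation}
        b_{12}=e_1-e_2,\qquad b_{13}=e_1,\qquad b_{23}=e_2 ,
\end{equation}
so the reduced Laplacian is
\begin{equation}
        M(w)=w_{12}b_{12}b_{12}^T+w_{13}b_{13}b_{13}^T+w_{23}b_{23}b_{23}^T
        =\begin{pmatrix} w_{12}+w_{13} & -w_{12}\\ -w_{12} & w_{12}+w_{23}\end{pmatrix}.
\end{equation}
A direct expansion gives $\det M(w)=w_{12}w_{13}+w_{12}w_{23}+w_{13}w_{23}=E(w)$. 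This also follows from \cref{prop:matrix-tree}. For positive $w$ the matrix $M(w)$ is positive definite, since it is diagonally dominant with positive diagonal, or since $E(w)>0$ and the trace is positive.

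Next I apply \cref{lem:riesz} with $m=2$. Its hypothesis $\beta>(m-1)/2$ is exactly $\beta>1/2$. It gives
\begin{equation}
        E(w)^{-\beta}=\Gamma_2(\beta)^{-1}\int_{\Pi_2}e^{-\tr(M(w)Y)}\det(Y)^{\beta-3/2}\,dY.
\end{equation}
By linearity, $\tr(M(w)Y)=\sum_e w_e\, b_e^TYb_e$. Computing each quadratic form gives $b_{13}^TYb_{13}=Y_{11}$, $b_{23}^TYb_{23}=Y_{22}$, and $b_{12}^TYb_{12}=Y_{11}+Y_{22}-2Y_{12}$. These are precisely $r_{13},r_{23},r_{12}$, which yields the displayed identity.

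For the geometric claim, I follow \cref{lem:complete-graph-elr}. Take $c_3=0$, $c_1=e_1$, $c_2=e_2$ in $\R^2$, and set $\xi_i=Y^{1/2}c_i$. Then
\begin{equation}
        \|\xi_i-\xi_j\|^2=(c_i-c_j)^TY(c_i-c_j)=b_{ij}^TYb_{ij}=r_{ij}(Y).
\end{equation}
So $(r_{12},r_{13},r_{23})$ is the squared-distance triple of the points $\xi_1,\xi_2,\xi_3\in\R^2$.

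No step is a real obstacle. The only care needed is consistent bookkeeping of the root choice, so that $r_{13}=Y_{11}$ and $r_{23}=Y_{22}$ match the labelling in the statement.
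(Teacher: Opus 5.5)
Your proposal is correct and follows essentially the same route as the paper: the same rooting at vertex $3$, the same reduced Laplacian $M(w)$, and the same application of \cref{lem:riesz} with $m=2$. For the geometric claim, your points $Y^{1/2}e_1,\,Y^{1/2}e_2,\,0$ are just one explicit choice of the Gram vectors $u,v$ (with third point $0$) that the paper uses.
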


\begin{proof}
Root the triangle at vertex $3$, so the remaining coordinates correspond to vertices $1$ and $2$.  The edge vectors are
\begin{equation}
        b_{12}=e_1-e_2,
        \qquad b_{13}=e_1,
        \qquad b_{23}=e_2.
\end{equation}
Thus
\begin{align}
        M(w)
        &=
        w_{12}(e_1-e_2)(e_1-e_2)^T
        +w_{13}e_1e_1^T+w_{23}e_2e_2^T  \\
        &=
        \begin{pmatrix}
        w_{12}+w_{13} & -w_{12}\\
        -w_{12} & w_{12}+w_{23}
        \end{pmatrix}.
\end{align}
Its determinant is $E(w)$.  Applying \cref{lem:riesz} with $m=2$ gives the displayed integral, because
\begin{equation}
        \tr(M(w)Y)
        =
        w_{12}(Y_{11}+Y_{22}-2Y_{12})
        +w_{13}Y_{11}+w_{23}Y_{22}.
\end{equation}
If $Y\in\Pi_2$, choose $u,v\in\R^2$ whose Gram matrix is $Y$.  Then
\begin{equation}
        r_{12}=|u-v|^2,
        \qquad r_{13}=|u-0|^2,
        \qquad r_{23}=|v-0|^2,
\end{equation}
which proves the squared-distance assertion.
\end{proof}

\begin{corollary}[Application to $K_5-e$]
\label{cor:k5e}
For every $\beta>1$, the function $T_{K_5-e}^{-\beta}$ is completely monotone on its positive edge cone.  Hence $K_5-e\in\calG_\beta$ for every $1<\beta<3/2$.
\end{corollary}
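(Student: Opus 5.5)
The quickest route is through \cref{cor:partial-3-trees}: \(K_5-e\) is a partial \(3\)-tree, because adding back the missing edge \(st\) gives \(K_5\)? No — that is the wrong completion. Instead I will use the chordal structure the labelling already exhibits. Start with the terminal triangle \(\{1,2,3\}\), which is a clique \(K_3\). Adjoin \(s\) adjacent exactly to the clique \(\{1,2,3\}\), then adjoin \(t\) adjacent exactly to the same clique. Each step adds a simplicial vertex over a clique of size \(3\), so \(K_5-e\) is itself chordal with clique number \(4\), and hence \(\tw(K_5-e)\le 3\). \Cref{thm:bounded-treewidth} with \(k=3\), or directly \cref{cor:parallel-star-lifts} with core \(R=\{1,2,3\}\), then gives complete monotonicity for every \(\beta>1\).

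For a self-contained version in the spirit of this section, I will combine the factorization of \cref{lem:k5e-factor} with the triangle Riesz integral of \cref{lem:triangle-riesz}. The steps are as follows.

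First, write \(T^{-\beta}=A^{-\beta}B^{-\beta}E(c+p(a)+p(b))^{-\beta}\).

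Second, insert the Riesz integral for \(E^{-\beta}\), which is valid since \(\beta>1>1/2\). For fixed \(Y\) the integrand factorizes as
\[
e^{-c\cdot r(Y)}\cdot A^{-\beta}e^{-p(a)\cdot r(Y)}\cdot B^{-\beta}e^{-p(b)\cdot r(Y)}.
\]

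Third, recognize each of the last two factors as the Gaussian star kernel of \cref{lem:gaussian-star} with \(d=3\). This is legitimate because \(r(Y)\) is a squared-distance triple of three points in \(\R^2\), as shown in \cref{lem:triangle-riesz}, and \(\beta>1=(d-1)/2\).

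Fourth, conclude that each factor is a positive Laplace transform in \(a\) and in \(b\) respectively, while \(e^{-c\cdot r(Y)}\) is a Laplace kernel in \(c\) with nonnegative coordinates. By \cref{lem:closure}(1), the product over disjoint variable blocks is completely monotone. Integrating against the positive measure \(\Gamma_2(\beta)^{-1}\det(Y)^{\beta-3/2}\,dY\) then preserves complete monotonicity by \cref{lem:closure}(3).

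The finiteness required in \cref{lem:closure}(3) is automatic, since the integral equals \(T^{-\beta}<\infty\) by Tonelli's theorem. The only point needing care is the third step: checking that \(p(a)\cdot r\) matches \(\frac1A\sum_{i<j}a_ia_jr_{ij}\) with consistent pair indexing \(12,13,23\). This holds by the definition of \(p\). The statement that \(K_5-e\in\calG_\beta\) on \(1<\beta<3/2\) is then just a restriction of the range.
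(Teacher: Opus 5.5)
Your proposal is correct and follows the paper's proof. The paper also observes that $K_5-e$ is chordal with maximal cliques of size $4$, so $\tw(K_5-e)=3$, applies \cref{cor:partial-3-trees}, and remarks that the factorization of \cref{lem:k5e-factor} together with the degree-three kernel of \cref{lem:gaussian-star} gives the same conclusion directly. Your self-contained second argument is exactly that remark written out, and it is sound: the triangle Riesz step needs only $\beta>1/2$, and the binding constraint $\beta>1=(d-1)/2$ comes from the two star kernels.
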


\begin{proof}
The graph \(K_5-e\) is chordal and has maximal cliques of size \(4\), so \(\tw(K_5-e)=3\).  The result follows from \cref{cor:partial-3-trees}.  The explicit factorization above gives the same conclusion directly by the degree-three star-kernel representation.
\end{proof}

\section{Application: the wheel \texorpdfstring{$W_4$}{W4}}
\label{sec:w4}

The wheel \(W_4\), labelled as in the right panel of \cref{fig:test-graphs}, is also a partial \(3\)-tree, hence an immediate application of \cref{thm:bounded-treewidth}.  We retain the explicit \(K_4\)-core star--mesh factorization because it shows how the treewidth theorem specializes to this original Scott--Sokal test graph.

Let $W_4$ have hub $0$ and rim vertices $1,2,3,4$, with rim cycle
\begin{equation}
        1-2-3-4-1
\end{equation}
and spokes from $0$ to each rim vertex.  All edge variables in this section are positive.  Use the following edge variables:
\begin{equation}
        p=x_{01},\quad q=x_{02},\quad r=x_{03},\quad s=x_{04},
\end{equation}
for the spokes, and
\begin{equation}
        a=x_{12},\quad b=x_{23},\quad c=x_{34},\quad d=x_{41}
\end{equation}
for the rim edges.  Put
\begin{equation}
        S=p+a+d.
\end{equation}
We eliminate the rim vertex $1$, whose three incident conductances are $p,a,d$.

\begin{lemma}[Star--mesh factorization for $W_4$]
\label{lem:w4-factor}
Let $K_4$ be the complete graph on vertices $0,2,3,4$.  Assign its edge conductances by
\begin{equation}
        y_{02}=q+\frac{pa}{S},
        \qquad
        y_{04}=s+\frac{pd}{S},
        \qquad
        y_{24}=\frac{ad}{S},
\end{equation}
and
\begin{equation}
        y_{03}=r,
        \qquad
        y_{23}=b,
        \qquad
        y_{34}=c.
\end{equation}
Then
\begin{equation}
        T_{W_4}(p,q,r,s,a,b,c,d)=S\,T_{K_4}(y).
\end{equation}
\end{lemma}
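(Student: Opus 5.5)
This factorization is a direct application of \cref{lem:starmesh} with \(d=3\). We take \(G=W_4\), let the eliminated vertex be \(v=1\), and let \(S_1=\{0,2,4\}\) be its neighbourhood. The incident conductances are \(u_0=p\), \(u_2=a\), \(u_4=d\), with sum \(U=p+a+d=S\). The network \(H=G-1\) has vertex set \(\{0,2,3,4\}\) and the edges \(02,03,04\) (weights \(q,r,s\)) and \(23,34\) (weights \(b,c\)). Since \(H\) contains the spokes from \(0\) to \(2\), \(3\) and \(4\), it is connected, so the lemma's hypothesis holds.

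\cref{lem:starmesh} then gives \(T_{W_4}=S\,T_H(x+p(u))\). The star--mesh correction adds \(u_iu_j/U\) to each pair in \(S_1\):
\begin{itemize}
\item \(pa/S\) is added to the existing edge \(02\), giving \(q+pa/S\);
\item \(pd/S\) is added to the existing edge \(04\), giving \(s+pd/S\);
\item \(ad/S\) creates the absent edge \(24\).
\end{itemize}
The edges \(03\), \(23\) and \(34\) lie outside \(S_1\times S_1\) and keep the weights \(r\), \(b\), \(c\).

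After the correction, every pair among \(\{0,2,3,4\}\) carries a positive weight, namely the conductances \(y\) listed in the statement. The modified network is therefore exactly \(K_4\) with weights \(y\), and the identity \(T_{W_4}=S\,T_{K_4}(y)\) follows.

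There is no real obstacle. The only care needed is bookkeeping: confirm which pairs of \(S_1\) were already edges of \(H\) (namely \(02\) and \(04\)) and which was created (\(24\)), and check that \(03\) is untouched because vertex \(3\) is not adjacent to vertex \(1\). If desired, a sanity check is to set all variables to \(1\). Then \(W_4\) has \(45\) spanning trees, and the right side is \(3\,T_{K_4}(4/3,1,4/3,1/3,1,1)\). Computing this \(3\times3\) reduced-Laplacian determinant should give \(15\), confirming the identity.
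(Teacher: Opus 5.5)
Your proposal is correct and follows the paper's proof: both apply the star--mesh lemma at the degree-three rim vertex \(1\), with terminals \(0,2,4\) and conductances \(p,a,d\). Both then record that \(pa/S\) and \(pd/S\) augment the existing edges \(02,04\), that \(ad/S\) creates \(24\), and that \(03,23,34\) are untouched. Your explicit check that \(H=W_4-1\) is connected, and your numerical check \(45=3\cdot 15\) (which does hold), are correct details that the paper leaves implicit.
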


\begin{proof}
Apply \cref{lem:starmesh} to the degree-three rim vertex $1$, with terminals $0,2,4$ and conductances $p,a,d$.  Eliminating vertex $1$ multiplies by $S=p+a+d$ and adds conductances
\begin{equation}
        \frac{pa}{S},\qquad \frac{pd}{S},\qquad \frac{ad}{S}
\end{equation}
between the terminal pairs $02,04,24$, respectively.  The remaining original edges are $02,03,04,23,34$, with conductances $q,r,s,b,c$.  After the added star--mesh edges are included, the remaining graph is $K_4$ on $0,2,3,4$ with the conductances stated above.
\end{proof}

\begin{lemma}[Riesz representation for $K_4$]
\label{lem:k4-riesz}
Root $K_4$ at vertex $3$ and use basis vectors $e_0,e_2,e_4$ for the remaining vertices.  For the six edges of $K_4$ set
\begin{equation}
        b_{03}=e_0,
        \quad b_{23}=e_2,
        \quad b_{43}=e_4,
\end{equation}
\begin{equation}
        b_{02}=e_0-e_2,
        \quad b_{04}=e_0-e_4,
        \quad b_{24}=e_2-e_4.
\end{equation}
Let
\begin{equation}
        M(y)=\sum_{ij} y_{ij} b_{ij}b_{ij}^T .
\end{equation}
Here and below the sum runs over the six edges
\begin{equation}
        03,\ 23,\ 43,\ 02,\ 04,\ 24
\end{equation}
of the complete graph on $\{0,2,3,4\}$, with $43$ denoting the same unoriented edge as $34$.  Then $T_{K_4}(y)=\det M(y)$.  For every $\beta>1$,
\begin{equation}
        T_{K_4}(y)^{-\beta}
        =\Gamma_3(\beta)^{-1}\int_{\Pi_3}
        \exp\left(-\sum_{ij}y_{ij}U_{ij}(Y)\right)
        \det(Y)^{\beta-2}\,dY,
\end{equation}
where
\begin{equation}
        U_{ij}(Y)=b_{ij}^TYb_{ij}.
\end{equation}
Moreover, for each $Y\in\Pi_3$, the triple $(U_{02},U_{04},U_{24})$ is a squared-distance triple of three points in $\R^2$.
\end{lemma}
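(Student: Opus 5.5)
The plan mirrors the proof of \cref{lem:triangle-riesz}, with $m=3$ in place of $m=2$. The lemma has three claims, and I will handle them in order.

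\emph{Determinant identity.} Root $K_4$ at vertex $3$ and delete that coordinate. The vectors $b_{ij}$ listed in the statement are then exactly the rooted signed incidence vectors of \cref{prop:matrix-tree}. For the edges $03,23,43$ the root coordinate is removed, leaving $e_0,e_2,e_4$. For the edges $02,04,24$ we get the differences $e_0-e_2$, $e_0-e_4$, $e_2-e_4$. The orientation of each $b_{ij}$ does not matter, since only $b_{ij}b_{ij}^T$ enters. Hence $\det M(y)=T_{K_4}(y)$ by \cref{prop:matrix-tree}, and $M(y)$ is positive definite for positive $y$ because $K_4$ is connected.

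\emph{Integral representation.} Apply \cref{lem:riesz} with $m=3$ and $M=M(y)$. The condition $\beta>(m-1)/2$ is exactly $\beta>1$, and the density exponent $\beta-(m+1)/2$ is $\beta-2$. By linearity of the trace,
\[
\tr(M(y)Y)=\sum_{ij}y_{ij}\,\tr(b_{ij}b_{ij}^TY)=\sum_{ij}y_{ij}\,b_{ij}^TYb_{ij},
\]
which is the stated exponent with $U_{ij}(Y)=b_{ij}^TYb_{ij}$. This is also the $m=3$ case of \cref{lem:complete-graph-elr}, written in coordinates.

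\emph{Squared-distance claim.} For $Y\in\Pi_3$, choose vectors $g_0,g_2,g_4\in\R^3$ with Gram matrix $Y$; for instance, take the columns of $Y^{1/2}$. Then
\[
U_{02}=\|g_0-g_2\|^2,\qquad U_{04}=\|g_0-g_4\|^2,\qquad U_{24}=\|g_2-g_4\|^2,
\]
and $U_{03},U_{23},U_{43}$ are the squared distances from these points to the origin, which plays the role of vertex $3$. The three points $g_0,g_2,g_4$ span an affine plane, or a lower-dimensional flat contained in one. Choose an isometry of that plane onto $\R^2$. Isometries preserve pairwise distances, so $(U_{02},U_{04},U_{24})$ is a squared-distance triple of three points in $\R^2$.

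The only step needing any care is this last one. The distances among $g_0,g_2,g_4$ are measured in $\R^3$, not $\R^2$. This is resolved by the observation that any three points lie in a $2$-dimensional affine subspace, exactly as the affine subspace $B$ is used in \cref{lem:gaussian-star} with $d=3$. The resulting triple is the one fed into the degree-three Gaussian star kernel for the rim-vertex elimination of \cref{lem:w4-factor}.
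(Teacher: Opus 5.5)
Your proposal is correct and follows the paper's proof essentially step for step. You use the matrix-tree theorem rooted at vertex $3$, then \cref{lem:riesz} with $m=3$, then a Gram factorization of $Y$ (you take $Y^{1/2}$ where the paper writes $Y=LL^T$), and finally the observation that any three points of $\R^3$ lie in an affine plane isometric to $\R^2$.
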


\begin{proof}
The identity $T_{K_4}(y)=\det M(y)$ is \cref{prop:matrix-tree} applied to $K_4$ rooted at vertex $3$.  The integral representation follows from \cref{lem:riesz} with $m=3$; the condition is $\beta>(3-1)/2=1$.

It remains only to prove the squared-distance assertion.  Since $Y\in\Pi_3$, write $Y=LL^T$.  Then
\begin{equation}
        U_{ij}=b_{ij}^TYb_{ij}=|L^Tb_{ij}|^2.
\end{equation}
In particular,
\begin{equation}
        U_{02}=|L^Te_0-L^Te_2|^2,
        \quad
        U_{04}=|L^Te_0-L^Te_4|^2,
        \quad
        U_{24}=|L^Te_2-L^Te_4|^2.
\end{equation}
These are the squared distances among the three points $L^Te_0,L^Te_2,L^Te_4\in\R^3$.  Any three Euclidean points lie in an affine plane, so the same squared distances are realized by three points in $\R^2$.
\end{proof}

\begin{corollary}[Application to $W_4$]
\label{cor:w4}
For every $\beta>1$, the function $T_{W_4}^{-\beta}$ is completely monotone on its positive edge cone.  Hence $W_4\in\calG_\beta$ for every $1<\beta<3/2$.
\end{corollary}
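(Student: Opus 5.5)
The quickest route is structural. \(W_4\) has treewidth \(3\), so \cref{cor:partial-3-trees} applies. To see this, add the chord \(02\). Vertex \(1\) is then adjacent to \(0,2,4\), and we also add \(24\). The completed graph is the \(K_4\) on \(\{0,2,3,4\}\) with vertex \(1\) attached simplicially to the triangle \(\{0,2,4\}\). This graph is chordal with clique number \(4\), so \(\tw(W_4)\le3\). Hence \(T_{W_4}^{-\beta}\) is completely monotone for every \(\beta>1\), and membership in \(\calG_\beta\) for \(1<\beta<3/2\) follows at once. I would record this as the proof proper. I would also sketch the explicit route through \cref{lem:w4-factor} and \cref{lem:k4-riesz}, in parallel with the \(K_5-e\) section.

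The explicit route begins with \cref{lem:w4-factor}, which gives
\begin{equation}
        T_{W_4}^{-\beta}=S^{-\beta}\,T_{K_4}(y)^{-\beta}.
\end{equation}
Insert the Riesz representation of \cref{lem:k4-riesz}, which is valid for \(\beta>1\). The exponent splits into two parts. The first is the linear part
\begin{equation}
        qU_{02}+rU_{03}+sU_{04}+bU_{23}+cU_{34},
\end{equation}
which is a positive Laplace kernel in these edge variables because each \(U_{ij}\ge0\). The second is the rational part
\begin{equation}
        \frac{1}{S}\bigl(paU_{02}+pdU_{04}+adU_{24}\bigr).
\end{equation}
This rational part is exactly \(\frac1S\sum u_iu_j\|\xi_i-\xi_j\|^2\) for the star \(u=(p,a,d)\). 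Here the points are \(\xi_0,\xi_2,\xi_4\), which \cref{lem:k4-riesz} realizes as the three points \(L^Te_0,L^Te_2,L^Te_4\), and the squared distances are \(U_{02},U_{04},U_{24}\).

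Together with the prefactor \(S^{-\beta}\), the rational part is therefore the Gaussian star kernel of \cref{lem:gaussian-star} with \(d=3\). That lemma makes it a positive Laplace transform in \((p,a,d)\) for \(\beta>1\), with coefficients \(t+\|z-\xi_i\|^2\ge0\).

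For each fixed \(Y\), the full integrand is a product of a Laplace kernel in \((q,r,s,b,c)\) and a Laplace kernel in \((p,a,d)\). These are disjoint variable blocks, so the product is completely monotone by \cref{lem:closure}(1). Integrating against the positive measure \(\det(Y)^{\beta-2}\,dY\) and applying \cref{lem:closure}(3) gives complete monotonicity, with Tonelli justifying the interchanges. This is the \(\ELR\) mechanism of \cref{prop:simplicial-extension-elr} made concrete.

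The only delicate point is measurability and positivity when the Gaussian star representation is combined with the Riesz integral. This is already handled in the proof of \cref{prop:simplicial-extension-elr}. For that reason the reduction to \cref{thm:bounded-treewidth} is the cleanest formal proof. The main thing to check there is the chordal completion, which is elementary.
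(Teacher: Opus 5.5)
Your proof is correct and is essentially the paper's: both complete \(W_4\) to the chordal graph \(K_5-e\) of clique number \(4\) by adding the rim diagonal \(24\), then invoke \cref{cor:partial-3-trees}, and both mention the explicit route through \cref{lem:w4-factor}, \cref{lem:k4-riesz} and the degree-three Gaussian star kernel as an alternative. One small slip: \(02\) is already a spoke of \(W_4\) (with variable \(q\)), so the only edge you need to add is \(24\).
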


\begin{proof}
The graph \(W_4\) is a subgraph of \(K_5-e\), obtained by deleting one rim diagonal from a chordal graph with maximal clique size \(4\).  Hence \(\tw(W_4)\le3\).  The result follows from \cref{cor:partial-3-trees}.  The explicit factorization above gives the same conclusion directly by the degree-three star-kernel representation.
\end{proof}

\section{Consequences, scope, and limitations}
\label{sec:conclusion}

\Cref{thm:bounded-treewidth} gives the structural threshold
\begin{equation}
        \tw(G)\le k
        \quad\Longrightarrow\quad
        T_G^{-\beta}\text{ is completely monotone for }
        \beta>\frac{k-1}{2}.
\end{equation}
For the first open Scott--Sokal interval \(1<\beta<3/2\), this proves every partial \(3\)-tree, including finite Apollonian networks and the test graphs \(K_5-e\) and \(W_4\).  These examples are therefore not isolated positive cases; they are members of a full treewidth class.

The theorem does not classify \(\calG_\beta\) in this interval, nor does it imply that all \(K_5\)-minor-free graphs lie in \(\calG_\beta\).  Its natural boundary is the ordinary Riesz threshold: a clique of size \(k+1\) requires \(\beta>(k-1)/2\) in the positive-density argument.  Thus the present method reaches treewidth \(3\) in the first open interval, but a genuine treewidth-\(4\) core would require a new boundary-Riesz representation or a different positivity mechanism.  The endpoint \(\beta=1\) for partial \(3\)-trees is likewise outside the present open-density theorem.

\end{document}